\newtheorem{theorem}{Theorem}[section]
\newtheorem{defn}[theorem]{Definition}
\newtheorem{lemma}[theorem]{Lemma}
\newtheorem{eple}[theorem]{Example}
\newtheorem{rmk}[theorem]{Remarks}
\newtheorem{dsc}[theorem]{Discussion}
\newtheorem{nota}[theorem]{Notation}
\newsavebox{\indbin}
\savebox{\indbin}{\begin{picture}(0,0)
\newlength{\gnu}
\settowidth{\gnu}{$\smile$} \setlength{\unitlength}{.5\gnu}
\put(-1,-.65){$\smile$} \put(-.25,.1){$|$}
\end{picture}}
\newcommand{\be}{\begin{enumerate}}
\newcommand{\bd}{\begin{defn}}
\newcommand{\bt}{\begin{theorem}}
\newcommand{\bl}{\begin{lemma}}
\newcommand{\ee}{\end{enumerate}}
\newcommand{\ed}{\end{defn}}
\newcommand{\et}{\end{theorem}}
\newcommand{\el}{\end{lemma}}
\begin{document}
\title{A Proof of the Ergodic Theorem using Nonstandard Analysis}
\author{Tristram de Piro}
\address{Mathematics Department, The University of Exeter, Exeter}
 \email{t.de-piro@exeter.ac.uk}
\maketitle
\begin{abstract}
The following paper follows on from \cite{kam} and gives a rigorous proof of the Ergodic Theorem, using nonstandard analysis.
\end{abstract}
\begin{section}{The Ergodic Theorem}
\noindent \\
There are many versions of the ergodic theorem, but the one we will prove in this paper, using nonstandard analysis, is the following;\\

\begin{theorem}{Ergodic Theorem}
\label{one}

Let $(\Omega,\mathfrak{C},\mu)$ be a probability space, and let $T$ be a measure preserving transformation, then, if $g\in L^{1}(\Omega,\mathfrak{C},\mu)$;\\

$\diamond{g}(\omega)=lim_{n\rightarrow\infty}{1\over n}\sum_{i=0}^{n-1}g(T^{i}\omega)$\\

exists for almost all $\omega\in\Omega$, with respect to $\mu$, and, $\diamond{g}\in L^{1}(\Omega,\mathfrak{C},\mu)$, with;\\

$\int_{\Omega}\diamond{g} d\mu =\int_{\Omega}g d\mu$\\

\end{theorem}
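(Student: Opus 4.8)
The plan is to make the nonstandard argument of \cite{kam} fully rigorous. I would work in an $\aleph_{1}$-saturated nonstandard extension, fix an infinite hypernatural $N\in{}^{*}\mathbb{N}\setminus\mathbb{N}$, and study the internal ${}^{*}\mathfrak{C}$-measurable function on $({}^{*}\Omega,{}^{*}\mathfrak{C},{}^{*}\mu)$ given by
\[
A_{N}(\omega)\ :=\ \frac{1}{N}\sum_{i=0}^{N-1}{}^{*}g\big({}^{*}T^{\,i}\omega\big),
\]
noting $A_{N}(\omega)={}^{*}a_{N}(\omega)$ where $a_{n}(\omega):=\frac{1}{n}\sum_{i=0}^{n-1}g(T^{i}\omega)$. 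One first reduces to $g\ge 0$ by splitting $g=g^{+}-g^{-}$. The goal is then to show that for $\mu$-a.e.\ $\omega$ the sequence $a_{n}(\omega)$ converges to the finite number $\diamond g(\omega):=\mathrm{st}\,A_{N}(\omega)$, and that $\diamond g\in L^{1}(\mu)$ with $\int\diamond g\,d\mu=\int g\,d\mu$.

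For the almost-everywhere convergence I would use a hyperfinite maximal inequality. The engine is the elementary finite rising-sun (Garsia) lemma: for reals $x_{0},\dots,x_{n-1}$ and $\beta\in\mathbb{R}$, if $S=\{\,0\le i<n:\exists k\ge 1,\ i+k\le n,\ \sum_{j=i}^{i+k-1}x_{j}>k\beta\,\}$ then $\sum_{i\in S}x_{i}\ge\beta|S|$. I would transfer this to the hyperfinite data $\big({}^{*}g({}^{*}T^{i}\omega)\big)_{i<N}$, integrate the transferred inequality over an arbitrary $T$-invariant $E\in\mathfrak{C}$, and use that $T$ is measure preserving (a Fubini computation over $E\times\{0,\dots,N-1\}$ together with a Ces\`aro/averaging passage) to extract the maximal ergodic theorem: if $\bar a:=\limsup_{n}a_{n}>\beta$ on $E$ then $\int_{E}g\,d\mu\ge\beta\mu(E)$, and symmetrically, applying the lemma to $\big(\beta-{}^{*}g({}^{*}T^{i}\omega)\big)_{i<N}$, if $\underline a:=\liminf_{n}a_{n}<\alpha$ on $E$ then $\int_{E}g\,d\mu\le\alpha\mu(E)$. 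For rationals $\alpha<\beta$ the $T$-invariant set $E_{\alpha,\beta}=\{\underline a<\alpha<\beta<\bar a\}$ then satisfies $\beta\mu(E_{\alpha,\beta})\le\int_{E_{\alpha,\beta}}g\le\alpha\mu(E_{\alpha,\beta})$, so $\mu(E_{\alpha,\beta})=0$; a countable union over $\alpha<\beta$ gives $\underline a=\bar a$ a.e., i.e.\ $\lim_{n}a_{n}$ exists a.e., while taking $E=\{\bar a>\beta\}$ gives $\beta\mu(\{\bar a>\beta\})\le\int_{\Omega}g\,d\mu$, hence $\bar a<\infty$ a.e.\ and the limit is finite a.e. Finally, wherever $a_{n}(\omega)\to L<\infty$, transfer of convergence forces ${}^{*}a_{K}(\omega)\approx L$ for every infinite $K$, in particular $A_{N}(\omega)\approx L$; so $\diamond g=\mathrm{st}\,A_{N}$ equals this a.e.\ limit and is $\mathfrak{C}$-measurable.

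For the $L^{1}$ conclusion and the integral identity I would exploit $S$-integrability. Since $g\in L^{1}(\mu)$, ${}^{*}g$ is $S$-integrable with respect to ${}^{*}\mu$ (transfer of $\int_{\{g>M\}}g\,d\mu\to 0$), and since each $T^{i}$ is measure preserving, ${}^{*}g\circ{}^{*}T^{i}$ has the same ${}^{*}\mu$-law as ${}^{*}g$. Choosing, by de la Vall\'ee-Poussin, a convex increasing $\Phi$ with $\Phi(t)/t\to\infty$ and $\int\Phi(g)\,d\mu<\infty$, and transferring Jensen's inequality under the internal integral, ${}^{*}\!\int\Phi(A_{N})\,d{}^{*}\mu\le\frac{1}{N}\sum_{i<N}{}^{*}\!\int\Phi({}^{*}g\circ{}^{*}T^{i})\,d{}^{*}\mu={}^{*}\!\int\Phi({}^{*}g)\,d{}^{*}\mu$, so $A_{N}$ is itself $S$-integrable. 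Hence $\mathrm{st}\,A_{N}$ is Loeb integrable with $\int\mathrm{st}\,A_{N}\,dL({}^{*}\mu)=\mathrm{st}\,{}^{*}\!\int A_{N}\,d{}^{*}\mu$; and ${}^{*}\!\int A_{N}\,d{}^{*}\mu={}^{*}\big(\int a_{n}\,d\mu\big)\big|_{n=N}={}^{*}\big(\int g\,d\mu\big)$ has standard part $\int g\,d\mu$. A push-down argument for the standardization onto $(\Omega,\mathfrak{C},\mu)$ (suitably completed), together with the identity $\mathrm{st}\,A_{N}=\diamond g$ a.e.\ from the previous step, identifies $\int\mathrm{st}\,A_{N}\,dL({}^{*}\mu)$ with $\int_{\Omega}\diamond g\,d\mu$, giving $\diamond g\in L^{1}(\mu)$ and $\int_{\Omega}\diamond g\,d\mu=\int_{\Omega}g\,d\mu$. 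Undoing the reduction $g=g^{+}-g^{-}$ then completes the proof.

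I expect the main obstacle to be the $S$-integrability bookkeeping for an \emph{unbounded} $L^{1}$ function $g$: one must handle the $\mu$-null set on which ${}^{*}g({}^{*}T^{i}\omega)$ can fail to be finite for some $i<N$; prove that hyperfinite averaging preserves $S$-integrability and that $A_{N}$ is $S$-integrable with respect to the extension of the \emph{given} standard measure (the de la Vall\'ee-Poussin/Jensen route above being the cleanest, but still requiring care with the internal integral); and carry out the push-down that matches the Loeb-measurable function $\mathrm{st}\,A_{N}$ with the genuinely $\mathfrak{C}$-measurable limit $\diamond g$. Everything else is either the elementary finite combinatorics of the rising-sun lemma plus transfer, or the soft two-rationals argument.
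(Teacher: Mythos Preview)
Your proposal is essentially sound, but it follows a quite different route from the paper. The paper (following Kamae) never works in ${}^{*}\Omega$ and never proves a maximal inequality. Instead it fixes a single hyperfinite cyclic system $(K,\mathfrak{B},P,\phi)$ with $K=\{0,\dots,k-1\}$ and $\phi(x)=x+1\ (\mathrm{mod}\ k)$, and proves the ergodic theorem there by a direct block-covering argument: lift $g$ and $\min(\overline{g},M)$ to internal $F,G$ via an approximation lemma, use the definition of $\limsup$ to find for each $x$ a standard $n(x)$ with $\sum_{i<n(x)}G(\phi^{i}x)\le\sum_{i<n(x)}F(\phi^{i}x)+n(x)\epsilon$, take an internal selector, and sum over a hyperfinite partition of $K$ into such blocks. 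The bulk of the paper is then a reduction chain showing that every $(\Omega,\mathfrak{C},\mu,T)$ pushes forward to a shift on $[0,1]^{\mathcal N}$, and that every such shift with its invariant measure is a \emph{factor} of $(K,\mathfrak{B},P,\phi)$; this requires constructing a typical point by weak approximation through periodic orbits and a combinatorial (Eulerian-path) argument.

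Your approach, by contrast, is close to the classical Garsia proof in nonstandard dress: rising-sun lemma $\Rightarrow$ maximal ergodic inequality $\Rightarrow$ two-rationals argument for a.e.\ convergence, then uniform integrability (via de la Vall\'ee--Poussin and Jensen) for the $L^{1}$ identity. This is much shorter and avoids the factor/typical-point machinery entirely, but the genuinely nonstandard content is thin: transferring Garsia to a hyperfinite block of length $N$ gives nothing beyond the standard finite-$n$ inequality plus a limit, and your $S$-integrability step is just uniform integrability of $(a_{n})$ (so Vitali's theorem already yields $\int\diamond g=\int g$ standardly). The one place to be careful is the ``push-down'' you flag: the embedding $\Omega\hookrightarrow{}^{*}\Omega$ typically has Loeb measure zero, so ``$\mathrm{st}\,A_{N}=\diamond g$ a.e.'' cannot be read on standard points; you must instead compare $\mathrm{st}\,A_{N}$ with $\mathrm{st}\,{}^{*}(\diamond g)$ Loeb-a.e.\ on ${}^{*}\Omega$ (using that $\sup_{n\ge m}|a_{n}-\diamond g|\to 0$ in $\mu$-measure and transferring), and then identify $\int \mathrm{st}\,{}^{*}(\diamond g)\,dL({}^{*}\mu)$ with $\int\diamond g\,d\mu$ via truncation and MCT. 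Since the standard Vitali route sidesteps this entirely, you may prefer it.
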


\begin{rmk}
\label{two}
There are a number of good standard proofs of this result. A particular good reference is \cite{Pet}. However, the reader should be aware that it is assumed there that $\mathfrak{C}$ is complete and $T$ is\emph{invertible}, in the sense that $T$ is one-one and onto, and both $T$ and $T^{-1}$ are measurable. A m.p.t is then required to satisfy $\mu(C)=\mu(T^{-1}C)$ for all $C\in{\mathfrak C}$. We will not require these assumption in the proofs of this section, in the sense that we only require a m.p.t to be a measurable $T$ with $\mu(C)=\mu(T^{-1}C)$ for all $C\in{\mathfrak C}$. In \cite{Pet}, a seemingly stonger result is shown, (under the above assumptions), namely that if $C\in\mathfrak{C}$, with $T^{-1}(C)=C$, then;\\

$\int_{C}\diamond{g} d\mu =\int_{C}g d\mu$ $(*)$\\

 from which it easily follows that if $\mathfrak{C'}$ is the sub $\sigma$-algebra of all $T$-invariant sets, where a set $C$ is $T$ invariant in \cite{Pet}, if $T^{-1}C=C$ a.e d$\mu$, then $\diamond{g}=E(g|\mathfrak{C'})$, $(**)$. In the particular case when $T$ is ergodic, that is every $T$ invariant set has measure $0$ or $1$, we obtain the well known result that $\diamond{g}=E(g)$ a.e d$\mu$, $(***)$. However, this result $(*)$ follows easily from our Theorem \ref{one}. as we can, wlog, assume that $\mu(C)>0$, and then restrict and rescale the measure. Of course, we even obtain a slight strengthening of $(*)$, by our weaker assumption on a m.p.t, and obtain similar strengthenings of $(**)$ and $(***)$. (It is not necessary to restrict attention to real valued functions, in the statement of the theorem, the complex version follows immediately from the real case).

\end{rmk}

As usual, we work in an $\aleph_{1}$-saturated model. Let $k\in{^{*}{\mathcal N}_{>0}}$ be infinite, and let $K=\{x\in{^{*}{\mathcal N}}:0\leq x<k\}$. We let ${\mathfrak K}$ be the algebra of all internal subsets of $K$. Observe that as $K$ is hyperfinite, ${\mathfrak K}$ is a hyperfinite $^{*}{\sigma}$-algebra. We let $\nu$ denote the counting measure, defined by setting $\nu(A)={Card(A)\over k}$, for $A\in{\mathfrak K}$. We adopt some of the notation of Section 3 in \cite{dep}, and let $P={^{\circ}\nu}$. By Theorem 3.4, and remarks before Lemma 3.15 of \cite{dep}, $P$ extends uniquely to the completion $\mathfrak{B}$ of the $\sigma$-algebra, $\sigma({\mathfrak K})$, generated by ${\mathfrak K}$. It is clear that $(K,{\mathfrak B},P)$ is a probability space, it is also the Loeb space associated to $(K,{\mathfrak K},\nu)$. We let $\phi:K\rightarrow K$ denote the map defined by;\\

 $\phi(x)=x+1$, if $0\leq x<k-1$\\

 $\phi(x)=0$, if $x=k-1$\\

Clearly, $\phi$ is invertible, internal, preserves the counting measure $\nu$, and $\phi^{-1}(\sigma({\mathfrak K}))=\sigma({\mathfrak K})$. Then $P\circ\phi^{-1}$ defines a measure on $(K,\sigma({\mathfrak K}),P)$, extending $\nu$. By Theorem 3.4(ii) of \cite{dep}, it agrees with $P$. By definition of the completion,  $P\circ\phi^{-1}$ agrees with $P$ on $(K,{\mathfrak B},P)$, so $\phi$, and similarly $\phi^{-1}$ are m.p.t's. We will first prove the following;\\

\begin{theorem}
\label{three}
The ergodic theorem, as stated in Theorem \ref{one}, holds for $(K,{\mathfrak B},P,\phi)$.
\end{theorem}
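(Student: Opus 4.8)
The plan is to push the averaging problem down to an internal, essentially finitary, computation on the hyperfinite cycle $K$, and to read the conclusions back off on the Loeb space via $S$-integrability; the point is that, although $(K,\mathfrak{B},P,\phi)$ is far from ergodic, $\phi$ is a hyperfinite cyclic shift, so the only nontrivial ingredient, the maximal ergodic inequality, has a purely combinatorial proof at the internal level. So fix $g\in L^{1}(K,\mathfrak{B},P)$. By a lifting theorem for Loeb integrable functions (cf.\ Section~3 of \cite{dep}) there is an internal $S$-integrable $G:K\rightarrow{}^{*}{\mathcal R}$ with ${}^{\circ}G=g$ $P$-a.e.\ and $\int_{K}G\,d\nu\approx\int_{K}g\,dP$. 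Since $\phi$ is an internal bijection of $K$ preserving $\nu$, each $G\circ\phi^{i}$ is $S$-integrable with $\int_{K}G\circ\phi^{i}\,d\nu=\int_{K}G\,d\nu$, and for each standard $n$ the internal function $S_{n}G=\sum_{i=0}^{n-1}G\circ\phi^{i}$ is $S$-integrable and lifts $\sum_{i=0}^{n-1}g\circ\phi^{i}$, so that ${}^{\circ}\bigl({1\over n}S_{n}G(x)\bigr)={1\over n}\sum_{i=0}^{n-1}g(\phi^{i}x)$ for $P$-a.e.\ $x$. Note also the exact identity ${1\over k}\sum_{i=0}^{k-1}G(\phi^{i}x)=\int_{K}G\,d\nu$ for every $x$, since $x,\phi x,\dots,\phi^{k-1}x$ exhausts $K$.

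\smallskip

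\noindent The core step is the maximal ergodic inequality for $(K,\mathfrak{B},P,\phi)$: for every $h\in L^{1}(K,\mathfrak{B},P)$, writing $M^{h}(\omega)=\sup_{n\ge1}\sum_{i=0}^{n-1}h(\phi^{i}\omega)$, one has $\int_{\{M^{h}>0\}}h\,dP\ge0$. I would obtain this by transferring Garsia's finitary argument. Take an internal $S$-integrable lifting $H$ of $h$ and, for internal $m$ with $1\le m\le k$, put $M_{m}H=\max_{1\le n\le m}S_{n}H$; then $S_{n}H=H+(S_{n-1}H)\circ\phi\le H+\max(0,M_{m}H)\circ\phi$ for $1\le n\le m$, hence $M_{m}H\le H+\max(0,M_{m}H)\circ\phi$, hence on $\{M_{m}H>0\}$ one has $H\ge\max(0,M_{m}H)-\max(0,M_{m}H)\circ\phi$; integrating over $\{M_{m}H>0\}$ and using that $\phi$ preserves $\nu$ gives $\int_{\{M_{m}H>0\}}H\,d\nu\ge0$. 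This is a transfer of the corresponding statement for cyclic permutations of a finite set, so it holds for all internal $m$; taking standard parts, using $S$-integrability of $H$ and ${}^{\circ}H=h$, and letting $m$ run through the standard integers (with due care in matching the internal superlevel sets $\{M_{m}H>0\}$ to the standard set $\{M^{h}>0\}$) yields the inequality as stated.

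\smallskip

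\noindent The rest is classical, applied to $(K,\mathfrak{B},P,\phi)$. Since ${1\over n}g(\phi^{n}\omega)\to0$ $P$-a.e.\ (Borel--Cantelli, from $g\in L^{1}$ and $\phi$ measure preserving), for rationals $\beta<\alpha$ the set $E_{\alpha,\beta}=\{\omega:\liminf_{n}{1\over n}\sum_{i<n}g(\phi^{i}\omega)<\beta<\alpha<\limsup_{n}{1\over n}\sum_{i<n}g(\phi^{i}\omega)\}$ is $\phi$-invariant mod $P$; applying the maximal inequality to $(g-\alpha)\mathbf{1}_{E_{\alpha,\beta}}$ and to $(\beta-g)\mathbf{1}_{E_{\alpha,\beta}}$, whose maximal sets coincide with $E_{\alpha,\beta}$ up to $P$-null sets, gives $\alpha P(E_{\alpha,\beta})\le\int_{E_{\alpha,\beta}}g\,dP\le\beta P(E_{\alpha,\beta})$, so $P(E_{\alpha,\beta})=0$ and $\diamond g(\omega)=\lim_{n}{1\over n}\sum_{i<n}g(\phi^{i}\omega)$ exists $P$-a.e. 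As $\|{1\over n}\sum_{i<n}g\circ\phi^{i}\|_{1}\le\|g\|_{1}$ for every $n$, Fatou gives $\diamond g\in L^{1}(K,\mathfrak{B},P)$; and as the family $\{{1\over n}\sum_{i<n}g\circ\phi^{i}\}_{n}$ is uniformly integrable (approximate $g$ in $L^{1}$ by a bounded function, for which the averages are uniformly bounded), the a.e.\ convergence upgrades to $L^{1}$ convergence, so that $\int_{K}\diamond g\,dP=\lim_{n}{1\over n}\sum_{i<n}\int_{K}g\circ\phi^{i}\,dP=\int_{K}g\,dP$.

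\smallskip

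\noindent The step I expect to be the main obstacle is the last clause of the paragraph on the maximal ergodic inequality: the internal maximal functions $M_{m}H$ are taken with $n$ ranging over all internal indices $\le m$, so $\{M_{m}H>0\}$ can be strictly larger, in Loeb measure, than its ``standard part'', because a partial sum $S_{n}H$ may be positive yet infinitesimal; reconciling the internal and standard superlevel sets — for instance by inserting a standard $\varepsilon>0$ and transferring the inequality for $H-\varepsilon$ — is where the genuine care has to go. The lifting and the uniform-integrability estimate are routine.
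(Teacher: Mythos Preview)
Your argument is essentially sound, but it takes a genuinely different route from the paper's. The paper follows Kamae's direct hyperfinite computation: for $g\ge 0$ it sets $\overline g=\limsup_{n}\frac{1}{n}\sum_{i<n}g\circ\phi^{i}$ and $\underline g$ analogously, and shows $\int\overline g\,dP\le\int g\,dP\le\int\underline g\,dP$ by sandwiching $g$ and $\min(\overline g,M)$ between internal functions $F\ge g$ and $G\le\min(\overline g,M)$ (the Appendix theorem), then choosing for each $x$ the least $n=T(x)$ with $\sum_{i<n}G(\phi^{i}x)\le\sum_{i<n}F(\phi^{i}x)+n\epsilon$. The crucial point is that each $T(x)$ is standard, hence so is $r=\max_{x}T(x)$; the induced block partition of $[0,k)$ has a remainder interval of length $\le r$, i.e.\ $\nu$-infinitesimal, and the hyperfinite sum over the blocks telescopes to give the inequality. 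No maximal inequality, no $E_{\alpha,\beta}$ sets, no uniform integrability.

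Your approach is the textbook one: Garsia's maximal inequality, then the $E_{\alpha,\beta}$ argument for a.e.\ convergence, then Fatou and uniform integrability for the integral identity. This is perfectly valid on $(K,\mathfrak B,P,\phi)$, but notice that it barely uses the hyperfinite structure; in fact the one place you invoke it (transferring Garsia to the internal level) is unnecessary and is precisely where you flagged the obstacle. Since $(K,\mathfrak B,P,\phi)$ is already a standard probability space with a standard m.p.t., Garsia's inequality $\int_{\{M_{m}h>0\}}h\,dP\ge 0$ holds for each \emph{standard} $m$ by the usual proof applied verbatim, and the monotone limit $m\to\infty$ through $\mathcal N$ gives the maximal inequality with no need to reconcile internal superlevel sets with their standard parts. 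So your worry dissolves once you drop the internal detour. What the paper's approach buys is that the nonstandard content is essential rather than ornamental: the finiteness of $r=\max_{x}T(x)$ and the infinitesimality of the leftover block are genuinely hyperfinite phenomena. What your approach buys is modularity---each step is a named classical lemma---at the cost of making Theorem~\ref{three} a corollary of the standard ergodic theorem rather than an independent nonstandard proof of it.
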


\begin{proof}
Let $g\in L^{1}(K,{\mathfrak B},P)$, without loss of generality, we can assume that $g\geq 0$. For $x\in K$, we let;\\

$\overline{g}(x)=limsup_{n\rightarrow\infty}{1\over n}\sum_{i=0}^{n-1}g(\phi^{i}x)$\\

$\underline{g}(x)=liminf_{n\rightarrow\infty}{1\over n}\sum_{i=0}^{n-1}g(\phi^{i}x)$\\

In order to prove the theorem, it is sufficient to show that $\overline{g}$ is integrable and;\\

$\int_{K}\overline{g} dP \leq \int_{K} g dP \leq \int_{K}\underline{g} dP$ $(\dag)$\\

Then, as $\underline{g}\leq \overline{g}$, we must have equality in $(\dag)$, so $\underline{g}=\overline{g}$ a.e d$P$, that is $\diamond{g}$ exists a.e d$P$, and;\\

$\int_{K}\diamond{g} dP = \int_{K} g dP$\\

as required.\\

Now let $M\in{\mathcal N}_{>0}$, then, as $\overline{g}$ is ${\mathfrak B}$-measurable, see \cite{Rud}, $min(\overline{g},M)$ is integrable with respect to $P$. Let $\epsilon>0$ be standard, then we can apply Theorem \ref{appone} in the Appendix to this paper, and Definition 3.9 and Remarks 3.10 of \cite{dep}, to obtain internal functions $F,G:K\rightarrow^{*}{\mathcal R}$, with $g\leq F$ and $G\leq min(\overline{g},M)$, such that;\\

$|\int_{A} g dP - {1\over k}{^{*}{\sum_{x\in A}F(x)}}| < \epsilon$\\

$|\int_{A} min(\overline{g},M) dP - {1\over k}{^{*}{\sum_{x\in A}G(x)}}| < \epsilon$, for all internal $A\subset K$, $(\dag\dag)$.\\

Now observe that $\overline{g}$ is $\phi$-invariant,(\footnote{There is a probably a proof of this result in the literature, but we supply one here. Fix $x\in K$. Let $A_{m}={1\over m}\sum_{i=0}^{m-1}g(\phi^{i}x)$ and let $B_{m}={1\over m}\sum_{i=0}^{m-1}g(\phi^{i+1}x)$. Then a simple calculation shows that ${mB_{m}+g(x)\over m+1}=A_{m+1}$. Hence, $|B_{m}-A_{m+1}|=|{A_{m+1}-g(x)\over m}|$, $(*)$. Suppose that $\overline{g}(x)=t<\infty$, $(**)$, (the case when $\overline{g}(x)=\infty$ is similar), and $\overline{g}(\phi x)<t$, $(***)$, (the case $\overline{g}(\phi x)>t$ is again similar). Then, by $(***)$, there exists $\delta>0$, such that, for $m\geq m_{0}$, $B_{m}<t-\delta$. By $(*)$ and $(**)$, we can find $m_{1}\geq m_{0}$, such that $|B_{m}-A_{m+1}|<{\delta\over 2}$, for $m\geq m_{1}$. Again, by $(*)$, we can find $m_{2}\geq m_{1} \geq m_{0}$, such that $A_{m_{2}+1}>t-{\delta\over 2}$. This clearly gives a contradiction.}). Fixing $x\in K$, by the definition of $\overline{g}$, we can find $n\in{\mathcal N}_{>0}$ such that;\\

$min(\overline{g}(x),M)\leq {1\over n}\sum_{i=0}^{n-1}g(\phi^{i}x) + \epsilon$ $(*)$\\

Then, if $0\leq m \leq n-1$, we have;\\

$G(\phi^{m}x)\leq min(\overline{g}(\phi^{m}x),M)$, by definition of $G$\\

$\indent \ \ \ \ \ \ \ \ = min(\overline{g}(x),M)$, by $\phi$ invariance of $\overline{g}$\\

$\indent \ \ \ \ \ \ \ \ \leq {1\over n}\sum_{i=0}^{n-1}g(\phi^{i}x) + \epsilon$, by $(*)$\\

$\indent \ \ \ \ \ \ \ \ \leq {1\over n}\sum_{i=0}^{n-1}F(\phi^{i}x) + \epsilon$, by definition of $F$\\

Therefore,\\

$\sum_{i=0}^{n-1} G(\phi^{i}x) \leq n({1\over n}\sum_{i=0}^{n-1}F(\phi^{i}x) + \epsilon)=\sum_{i=0}^{n-1}F(\phi^{i}x) +n\epsilon$ $(**)$\\

Now let $S_{G}:[1,k)\times K\rightarrow{^{*}{\mathcal R}}$ be defined by;\\

$S_{G}(n,x)={^{*}\sum_{i=0}^{n-1} G(\phi^{i}x)}$\\

and, similarly, define $S_{F}$. By Definition 2.19 of \cite{dep}, and using the facts that $K$ is $*$-finite, and $G,F$ are internal, $S_{G}$ and $S_{F}$ are internal. Then, the relation $(**)$ becomes the internal relation on $[1,k)\times K$, given by $R(n,x)$ iff $S_{G}(n,x)\leq S_{F}(n,x)+n\epsilon$. Using the fact above, that the fibres of $R$ over $K$ are non-empty, by transfer of the corresponding standard result, we can find an internal function $T:K\rightarrow [1,k)$, which assigns
to $x\in K$, the least $n\in [1,k)$, for which $(**)$ holds. Moreover, as we have observed in $(*)$, $T(x)$ is standard, for all $x\in K$. By Lemma 3.11, $r=max_{x\in K}T(x)$ exists and is standard. Now, define $T_{j}$ hyper inductively by;\\

 $T_{0}=0$ and $T_{j}=T_{j-1}+T(T_{j-1})$\\

and let $J$ be the first $j$ such that $k-r\leq T_{j}<k$.(\footnote{This perhaps requires some explanation. Define $I=\{m\in{^{*}{\mathcal N}}_{>0}:\exists !S(dom(S)=[0,m]\wedge S(0)=0\wedge(\forall 1\leq j\leq m)S(j)=S(j-1)+T(S(j-1)_{mod k}))\}$, $(*)$, then it is easy to see that $I$ is internal, $I(1)$ holds, and $I(m)$ implies $I(m+1)$. Applying Lemma 2.12 of \cite{dep}, $I={^{*}{\mathcal N}}_{>0}$. Hence there exists an internal function $f$, defined on ${^{*}{\mathcal N}}_{>0}$, such that $f(m)$ is the unique $S$ satisfying $(*)$. We can then define $T_{j}=f(j)(j)$, and clearly $T_{j}-T_{j-1}\leq r$. Let $V=\{j\in{^{*}{\mathcal N}}_{>0}:T_{j}<k\}$. Then, as $T\geq 1$, $V$ is the interval $[1,t]$ for some infinite $t<k$. Then $k-r\leq T_{t}<k$, otherwise $T_{t+1}<k$. Then $U=\{j\in{^{*}{\mathcal N}}_{>0}:k-r\leq T_{j}<k\}$ is internal and non empty. Therefore, by transfer, it contains a first element $J$.})\\

Observe that $T_{j}$ defines an internal partition of the interval $[0,T_{J-1}]\subset [0,k)$, into $J-1$ blocks of step size $T_{j}-T_{j-1}=T((T_{j-1})$. Hence, we can write;\\

${1\over k}{^{*}\sum_{x=0}^{T_{J}-1} G(x)}={1\over k}{^{*}\sum_{j=0}^{{J-1}}}{^{*}\sum_{i=0}^{T(T_{j})-1} G(\phi^{i}T_{j})}$\\

$\leq {1\over k}{^{*}\sum_{j=0}^{{J-1}}}{^{*}\sum_{i=0}^{T(T_{j})-1} F(\phi^{i}T_{j})+T(T_{j})\epsilon}$  ,by definition of $T$ and $(**)$.\\

Now we can rearrange this last sum as;\\

${1\over k}{^{*}\sum_{x=0}^{T_{J}-1} F(x)} + {\epsilon\over k}{^{*}\sum_{j=0}^{J-1}T(T_{j})}$\\

$\ \ \ \ \ \ \ \ \ \ \  ={1\over k}{^{*}\sum_{x=0}^{T_{J}-1} F(x)} + {T_{J}\epsilon\over k}$\\

$\ \ \ \ \ \ \ \ \ \ \  < {1\over k}{^{*}\sum_{x=0}^{T_{J}-1} F(x)} + \epsilon $\\

using the facts that ${^{*}\sum_{j=0}^{J-1}T(T_{j})}={^{*}\sum_{j=0}^{J-1}(T_{j+1}-T_{j})}=T_{J}$, and $T_{J}<k$. Therefore, we have that;\\

${1\over k}{^{*}\sum_{x=0}^{T_{J}-1} G(x)}< {1\over k}{^{*}\sum_{x=0}^{T_{J}-1} F(x)} + \epsilon $ $(***)$\\

Now, observing that $\nu([T_{J},k))\leq {r\over k}\simeq 0$, as $r$ is standard, we have $P([T_{J},k))=0$. Hence, using $(\dag\dag)$, $(***)$;\\

$\int_{X} min(\overline{g},M) dP = \int_{[0,T_{J})} min(\overline{g},M) dP < {1\over k}{^{*}{\sum_{x=0}^{T_{J}-1}G(x)}} + \epsilon$\\

$< {1\over k}{^{*}\sum_{x=0}^{T_{J}-1} F(x)} + 2\epsilon < \int_{[0,T{J})}g dP + 3\epsilon = \int_{X}g dP + 3\epsilon$\\

Now, letting $M\rightarrow\infty$ and $\epsilon\rightarrow 0$, we can apply the MCT, to obtain;\\

$\int_{X}\overline{g} dP \leq \int_{X}g dP$\\

As $g$ is integrable with respect to $P$, so is $\overline{g}$, and a similar argument to the above demonstrates that $\int_{X}g dP \leq \int_{X}\underline{g} dP$. Therefore, $(\dag)$ is shown and the theorem is proved.

\end{proof}

We now generalise Theorem \ref{three}, to obtain Theorem \ref{one}. We let $\mathcal{P}$ consist of spaces of the form $({\mathcal R}^{\mathcal N}, \mathfrak{D}, \lambda,\sigma)$, where $\mathfrak{D}$ is the Borel field on ${\mathcal R}^{\mathcal N}$, $\sigma$ is the left shift on ${\mathcal R}^{\mathcal N}$, and $\lambda$ is a shift invariant probability measure. Note that $\sigma$ is not invertible, but we require that $\lambda=\sigma_{*}\lambda$, so $\sigma$ is a m.p.t, with respect to $\lambda$. Similarly, we let $\mathcal{Q}$ consist of spaces of the form  $({[0,1]}^{\mathcal N},\mathfrak{E},\rho,\sigma)$, where $\mathfrak{E}$ is the Borel field on ${[0,1]}^{\mathcal N}$, $\sigma$ is again the left shift, and $\rho$ is a shift invariant probability measure.\\

We first require the following simple lemma;\\

\begin{lemma}
\label{four}
Theorem \ref{one} is true iff the Ergodic Theorem holds for all spaces in $\mathcal{P}$.
\end{lemma}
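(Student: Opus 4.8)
The plan is to prove the two directions separately, the forward direction being trivial and the reverse direction being the content. The forward direction is immediate: spaces in $\mathcal{P}$ are probability spaces equipped with a m.p.t., so if Theorem \ref{one} holds in full generality it holds for them in particular. For the reverse direction, suppose the Ergodic Theorem holds for all spaces in $\mathcal{P}$, and let $(\Omega,\mathfrak{C},\mu)$ be an arbitrary probability space with m.p.t.\ $T$ and $g\in L^1(\Omega,\mathfrak{C},\mu)$. The idea is to transport the dynamical system $(\Omega,T)$ together with the observable $g$ onto $\mathcal{R}^{\mathcal N}$ via the \emph{orbit map}, or rather the map recording the successive values of $g$ along an orbit. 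Define $\Phi:\Omega\to\mathcal{R}^{\mathcal N}$ by $\Phi(\omega)=(g(\omega),g(T\omega),g(T^2\omega),\dots)$. First I would check that $\Phi$ is $(\mathfrak{C},\mathfrak{D})$-measurable, which follows because each coordinate $\omega\mapsto g(T^i\omega)$ is measurable ($g$ is measurable and $T$ is measurable, so $T^i$ is measurable), and the Borel $\sigma$-algebra $\mathfrak{D}$ on $\mathcal{R}^{\mathcal N}$ is generated by the coordinate projections. The key algebraic identity is the intertwining relation $\Phi\circ T=\sigma\circ\Phi$, which is immediate from the definition: the $i$-th coordinate of $\Phi(T\omega)$ is $g(T^{i+1}\omega)$, which is the $i$-th coordinate of $\sigma(\Phi(\omega))$.

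Next I would set $\lambda=\Phi_*\mu$, the pushforward measure on $(\mathcal{R}^{\mathcal N},\mathfrak{D})$. Then $\lambda$ is a probability measure, and it is $\sigma$-invariant: $\sigma_*\lambda=\sigma_*\Phi_*\mu=(\sigma\circ\Phi)_*\mu=(\Phi\circ T)_*\mu=\Phi_*(T_*\mu)=\Phi_*\mu=\lambda$, using that $T$ is a m.p.t. Hence $(\mathcal{R}^{\mathcal N},\mathfrak{D},\lambda,\sigma)\in\mathcal{P}$. Let $\pi_0:\mathcal{R}^{\mathcal N}\to\mathcal{R}$ denote the $0$-th coordinate projection; then $\pi_0\in L^1(\lambda)$ since $\int_{\mathcal{R}^{\mathcal N}}|\pi_0|\,d\lambda=\int_\Omega|\pi_0\circ\Phi|\,d\mu=\int_\Omega|g|\,d\mu<\infty$ by the change-of-variables formula for pushforwards. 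Applying the hypothesis to $\pi_0$ on this space, the limit $\diamond{\pi_0}(y)=\lim_n\frac1n\sum_{i=0}^{n-1}\pi_0(\sigma^i y)$ exists for $\lambda$-a.e.\ $y$, lies in $L^1(\lambda)$, and $\int\diamond{\pi_0}\,d\lambda=\int\pi_0\,d\lambda$.

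Finally I would pull this back along $\Phi$. For each $\omega$, $\pi_0(\sigma^i\Phi(\omega))=\pi_0(\Phi(T^i\omega))=g(T^i\omega)$, so $\frac1n\sum_{i=0}^{n-1}g(T^i\omega)=\frac1n\sum_{i=0}^{n-1}\pi_0(\sigma^i\Phi(\omega))$. Since the set $N\subset\mathcal{R}^{\mathcal N}$ where $\diamond{\pi_0}$ fails to exist has $\lambda(N)=0$, and $\lambda(N)=\mu(\Phi^{-1}(N))$, the average $\frac1n\sum_{i=0}^{n-1}g(T^i\omega)$ converges for all $\omega\notin\Phi^{-1}(N)$, i.e.\ $\mu$-a.e., with $\diamond{g}=\diamond{\pi_0}\circ\Phi$ a.e. This also gives $\diamond{g}\in L^1(\mu)$ and $\int_\Omega\diamond{g}\,d\mu=\int_{\mathcal{R}^{\mathcal N}}\diamond{\pi_0}\,d\lambda=\int_{\mathcal{R}^{\mathcal N}}\pi_0\,d\lambda=\int_\Omega g\,d\mu$, again by change of variables. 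This establishes Theorem \ref{one}.

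The main obstacle, such as it is, lies in one subtlety: the a.e.-existence of $\diamond{g}$ requires that $\diamond{g}$ be a genuine $\mathfrak{C}$-measurable function defined a.e., and one must be careful that $\diamond{\pi_0}\circ\Phi$ is $\mathfrak{C}$-measurable — which is fine since $\diamond{\pi_0}$ is $\mathfrak{D}$-measurable (or $\mathfrak{D}$-measurable after completion) and $\Phi$ is measurable — and that the null set is handled correctly, since $\Phi^{-1}$ of a $\lambda$-null Borel set is $\mu$-null. If one worries about completions, one notes $\diamond{\pi_0}$ agrees a.e.\ with a Borel function and replaces it by that; everything goes through. Beyond this bookkeeping, the proof is a routine transport-of-structure argument, and no real analytic difficulty arises — the work has been deferred to the case of $\mathcal{P}$, and subsequently (via $\mathcal{Q}$ and Theorem \ref{three}) to the nonstandard construction.
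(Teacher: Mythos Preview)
Your proof is correct and follows essentially the same approach as the paper: both define the orbit map $\omega\mapsto (g(T^n\omega))_{n\in\mathcal{N}}$ into $\mathcal{R}^{\mathcal{N}}$, push forward $\mu$ to obtain a shift-invariant $\lambda$, apply the assumed ergodic theorem to the $0$th coordinate projection, and pull back via change of variables. Your treatment is in fact slightly more explicit about the intertwining relation $\Phi\circ T=\sigma\circ\Phi$ and the handling of null sets and completions.
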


\begin{proof}

One direction is obvious. For the other direction, let $(\Omega,\mathfrak{C},\mu,T)$ and $g\in L^{1}(\Omega,\mathfrak{C},\mu)$ be given. Define a map $\tau:\Omega\rightarrow{\mathcal R}^{\mathcal N}$ by $\tau(\omega)(n)=g(T^{n}\omega)$. Clearly, as $g$ is measurable with respect to $\mathfrak{C}$ and $T$ is a m.p.t, using the definition of the Borel field on ${\mathcal R}^{m}$, for finite $m$, we have that for a cylinder set $U\in \mathfrak{D}$, $\tau^{-1}(U)\in \mathfrak{C}$. By the definition of the Borel field on ${\mathcal R}^{\mathcal N}$, $\tau^{-1}(\mathfrak{D})\subset\mathfrak{C}$, (\footnote{As $\{V\in \mathfrak{D}:\tau^{-1}(V)\in \mathfrak{C}\}$ is a $\sigma$-algebra containing the cylinder sets.}). Let $\lambda$ be the probability measure $\tau_{*}\mu$. Then $\lambda$ is $\sigma$ invariant, as clearly, using the fact that $T$ is a m.p.t, $\lambda=\sigma_{*}\lambda$ on the cylinder sets in $\mathfrak{D}$. Using the definition of the Borel field and Caratheodory's Theorem, we obtain that $\lambda=\sigma_{*}\lambda$. Let $\pi:{\mathcal R}^{\mathcal N}\rightarrow {\mathcal R}$ be the projection onto the $0'th$ coordinate. Then $g=\pi\circ\tau$, and, so $\pi\in L^{1}({\mathcal R}^{\mathcal N}, \mathfrak{D},\lambda)$ by the change of variables formula, (\footnote{This states that if $\tau:(X_{1},\mathfrak{C}_{1},\mu_{1})\rightarrow (X_{2},\mathfrak{C}_{2},\mu_{2})$ is measurable and measure preserving, so $\mu_{2}=\tau_{*}\mu_{1}$, then a function $\theta\in L^{1}(X_{2},\mathfrak{C}_{2},\mu_{2})$ iff $\tau^{*}\theta\in L^{1}(X_{1},\mathfrak{C}_{1},\mu_{1})$ and $\int_{C} \theta d\tau_{*}\mu_{1}=\int_{\tau^{-1}(C)} \tau^{*}\theta d\mu_{1}$.}). Moreover, $g(T^{i}\omega)=\pi(\sigma^{i}\tau(\omega))$, so applying the Ergodic Theorem for $({\mathcal R}^{\mathcal N},\mathfrak{D}, \lambda,\sigma)$, with the change of variables formula, we have that  $\diamond{g}$ exists and $\diamond{g}=\diamond{\pi}\circ\tau$ a.e $d\mu$, and $\int_{\Omega} \diamond{g} d\mu = \int_{\Omega} (\diamond{\pi}\circ\tau) d\mu = \int_{{\mathcal R}^{\mathcal N}} \diamond{\pi} d\lambda = \int_{{\mathcal R}^{\mathcal N}} \pi d\lambda = \int_{\Omega} g d\mu$ as required.

\end{proof}

We make the following definition;\\

\begin{defn}
\label{five}
We say that $({\mathcal R}^{\mathcal N}, \mathfrak{D}, \lambda,\sigma)\in\mathcal{\mathcal P}$ is a factor of $(K,{\mathfrak B},P,\phi)$ if there exists;\\

$\Gamma:(K,{\mathfrak B},P)\rightarrow({\mathcal R}^{\mathcal N}, \mathfrak{D},\lambda)$\\

which is measurable and measure preserving, such that;\\

 $\Gamma(\phi x)=\sigma(\Gamma x)$ a.e $(x\in K)$ d$P$.\\

We make the same definition if $({[0,1]}^{\mathcal N},\mathfrak{E},\rho,\sigma)\in\mathcal{\mathcal Q}$.\\

\end{defn}

\begin{lemma}
\label{six}
Suppose that $({\mathcal R}^{\mathcal N}, \mathfrak{D}, \lambda,\sigma)\in\mathcal{\mathcal P}$ is a factor of $(K,{\mathfrak B},P,\phi)$, then, if the Ergodic Theorem holds for $(K,{\mathfrak B},P,\phi)$, it holds for $({\mathcal R}^{\mathcal N}, \mathfrak{D}, \lambda,\sigma)$.
\end{lemma}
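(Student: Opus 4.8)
The plan is to transport the Ergodic Theorem along the factor map $\Gamma$, much as Lemma \ref{four} transports it along $\tau$. Let $h\in L^{1}({\mathcal R}^{\mathcal N},\mathfrak{D},\lambda)$; by the change of variables formula applied to $\Gamma$ (which is measurable and measure preserving by hypothesis), the function $\Gamma^{*}h = h\circ\Gamma$ lies in $L^{1}(K,{\mathfrak B},P)$. Applying the Ergodic Theorem for $(K,{\mathfrak B},P,\phi)$ — which holds by assumption, and in fact is Theorem \ref{three} — we obtain that $\diamond{(h\circ\Gamma)}(x)=\lim_{n\to\infty}\frac1n\sum_{i=0}^{n-1}(h\circ\Gamma)(\phi^{i}x)$ exists for $P$-almost all $x\in K$, is in $L^{1}(K,{\mathfrak B},P)$, and satisfies $\int_{K}\diamond{(h\circ\Gamma)}\,dP=\int_{K}(h\circ\Gamma)\,dP$.

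Next I would use the intertwining relation $\Gamma(\phi x)=\sigma(\Gamma x)$, valid for $x$ outside a $P$-null set $N$. An easy induction (replacing $N$ by the countable union $\bigcup_{i\geq 0}\phi^{-i}(N)$, which is still $P$-null since $\phi$ is measure preserving) gives $\Gamma(\phi^{i}x)=\sigma^{i}(\Gamma x)$ for all $i$ simultaneously, off a null set. Hence, for almost all $x$,
\[
\frac1n\sum_{i=0}^{n-1}(h\circ\Gamma)(\phi^{i}x)=\frac1n\sum_{i=0}^{n-1}h(\sigma^{i}(\Gamma x)),
\]
so the left-hand limit existing a.e.\ $dP$ says precisely that $\diamond{h}(y)=\lim_{n\to\infty}\frac1n\sum_{i=0}^{n-1}h(\sigma^{i}y)$ exists for $y$ in a set of the form $\Gamma(K\setminus N')$; and $\diamond{(h\circ\Gamma)}=\diamond{h}\circ\Gamma$ a.e.\ $dP$. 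Since $\lambda=\Gamma_{*}P$, the set on which $\diamond{h}$ fails to exist is $\lambda$-null, so $\diamond{h}$ exists $\lambda$-a.e. Then $\diamond{h}$ is the a.e.\ limit of measurable functions, hence measurable, and applying the change of variables formula again, $\int_{{\mathcal R}^{\mathcal N}}\diamond{h}\,d\lambda=\int_{K}(\diamond{h}\circ\Gamma)\,dP=\int_{K}\diamond{(h\circ\Gamma)}\,dP=\int_{K}(h\circ\Gamma)\,dP=\int_{{\mathcal R}^{\mathcal N}}h\,d\lambda$, which also shows $\diamond{h}\in L^{1}$; this is the Ergodic Theorem for $({\mathcal R}^{\mathcal N},\mathfrak{D},\lambda,\sigma)$.

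The main obstacle, and the only point requiring care, is the measurability and null-set bookkeeping: one must be sure that the a.e.-limit $\diamond{h}$ is genuinely $\mathfrak{D}$-measurable (not merely measurable with respect to some completion), and that "the exceptional set for $h$ on ${\mathcal R}^{\mathcal N}$ is the $\Gamma$-image of the exceptional set on $K$" is legitimate — images of measurable sets need not be measurable, so the correct statement is the contrapositive one, that $\Gamma^{-1}$ of the exceptional set for $h$ contains (up to a $P$-null set) the exceptional set for $h\circ\Gamma$, whence the former is $\lambda$-null because $\lambda=\Gamma_{*}P$. Both issues are handled exactly as in the proof of Lemma \ref{four}, where the identical device is used for $\tau$, so no genuinely new ingredient is needed; the remaining steps are the routine change-of-variables manipulations indicated above.
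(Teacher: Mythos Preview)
Your proposal is correct and follows essentially the same route as the paper's proof: pull $h$ back via $\Gamma$, apply the Ergodic Theorem on $K$, use the intertwining relation to identify $\diamond(\Gamma^{*}h)$ with $\Gamma^{*}(\diamond h)$ a.e., and finish with change of variables. The paper's version is much terser and simply asserts the a.e.\ identity $\diamond{\Gamma^{*}h}=\Gamma^{*}\diamond{h}$ without spelling out the null-set bookkeeping (passing to $\bigcup_{i\geq 0}\phi^{-i}(N)$) or the contrapositive measurability argument you supply, so your added care is an improvement in exposition rather than a different method.
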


\begin{proof}
The proof is similar to Lemma \ref{four}. If $h\in L^{1}({\mathcal R}^{\mathcal N}, \mathfrak{D},\lambda)$, then, by change of variables, $\Gamma^{*}h\in L^{1}(K,{\mathfrak B},P)$. Applying the Ergodic Theorem for $(K,{\mathfrak B},P,\phi)$ and the definition of a factor, we have that $\diamond{\Gamma^{*}h}$ exists and $\diamond{\Gamma^{*}h}=\Gamma^{*}\diamond{h}$,  a.e d$P$, $(*)$. So $\diamond{h}$ exists a.e d$\lambda$, and, again, by change of variables, $(*)$, and the Ergodic theorem for $(K,{\mathfrak B},P,\phi)$;\\

$\int_{{\mathcal R}^{\mathcal N}}\diamond{h} d\lambda= \int_{K}\Gamma^{*}(\diamond{h}) dP = \int_{K}\diamond{(\Gamma^{*}h)} dP = \int_{K}(\Gamma^{*}h) dP = \int_{{\mathcal R}^{\mathcal N}}h d\lambda$\\

\end{proof}

We now claim the following;\\

\begin{lemma}
\label{seven}
Every space in $\mathcal{P}$ is isomorphic, in the sense of dynamical systems, (\footnote{By which I mean there exists measurable and measure preserving maps $r:({\mathcal R}^{\mathcal N}, \mathfrak{D}, \lambda)\rightarrow ({[0,1]}^{\mathcal N},\mathfrak{E},\rho)$ and $s:({[0,1]}^{\mathcal N},\mathfrak{E},\rho)\rightarrow({\mathcal R}^{\mathcal N}, \mathfrak{D}, \lambda)$ such that $s\circ r=Id$ and $r\circ\sigma=\sigma\circ r$ a.e d$\lambda$, $r\circ s=Id$  and $s\circ\sigma=\sigma\circ s $ a.e d$\rho$}), to a space in $\mathcal{Q}$.
\end{lemma}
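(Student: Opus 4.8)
The plan is to construct the isomorphism coordinate-wise out of a single Borel isomorphism between $\mathcal{R}$ and a Borel subset of $[0,1]$, and then push a given shift-invariant measure $\lambda$ on $\mathcal{R}^{\mathcal{N}}$ forward to a shift-invariant measure $\rho$ on $[0,1]^{\mathcal{N}}$, checking that the coordinate-wise map conjugates the two shifts and is invertible off a null set. First I would fix a Borel isomorphism $\iota : \mathcal{R} \to B$, where $B \subseteq [0,1]$ is Borel (for instance $B = (0,1)$), with Borel inverse $\iota^{-1} : B \to \mathcal{R}$; such an $\iota$ exists because $\mathcal{R}$ and $(0,1)$ are both uncountable standard Borel spaces, hence Borel isomorphic (Kuratowski's theorem). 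Extend $\iota^{-1}$ arbitrarily but measurably to all of $[0,1]$, say by sending $[0,1]\setminus B$ to $0$. Define $r : \mathcal{R}^{\mathcal{N}} \to [0,1]^{\mathcal{N}}$ by $r(x)(n) = \iota(x(n))$ and $s : [0,1]^{\mathcal{N}} \to \mathcal{R}^{\mathcal{N}}$ by $s(y)(n) = \iota^{-1}(y(n))$. Both are Borel measurable, because a map into a countable product is Borel iff each coordinate is, and each coordinate is a composition of $\iota$ (resp.\ the extended $\iota^{-1}$) with a coordinate projection. Set $\rho = r_{*}\lambda$.

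Next I would verify the algebraic identities. Since $\iota^{-1}\circ\iota = \mathrm{Id}_{\mathcal{R}}$, we get $s\circ r = \mathrm{Id}$ on all of $\mathcal{R}^{\mathcal{N}}$, hence certainly a.e.\ $d\lambda$. For the shift: the left shift $\sigma$ satisfies $\sigma(x)(n) = x(n+1)$, so $r(\sigma(x))(n) = \iota(x(n+1)) = r(x)(n+1) = \sigma(r(x))(n)$, giving $r\circ\sigma = \sigma\circ r$ everywhere, hence a.e.\ $d\lambda$. That $\rho$ is shift-invariant then follows: $\sigma_{*}\rho = \sigma_{*}r_{*}\lambda = r_{*}\sigma_{*}\lambda = r_{*}\lambda = \rho$, so $([0,1]^{\mathcal{N}},\mathfrak{E},\rho,\sigma)\in\mathcal{Q}$. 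Measure preservation of $r$ is immediate from $\rho = r_{*}\lambda$ by definition.

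The one genuine subtlety — the main obstacle — is the two remaining a.e.\ identities on the $[0,1]^{\mathcal{N}}$ side, namely $r\circ s = \mathrm{Id}$ a.e.\ $d\rho$ and $s\circ\sigma = \sigma\circ s$ a.e.\ $d\rho$, and the assertion that $s$ is measure preserving. None of these hold everywhere, because $s$ mishandles coordinates landing in $[0,1]\setminus B$. The remedy is to show that the bad set is $\rho$-null. Let $N = \{\, y\in[0,1]^{\mathcal{N}} : y(n)\notin B \text{ for some } n\,\} = \bigcup_{n} \pi_{n}^{-1}([0,1]\setminus B)$, where $\pi_{n}$ is the $n$-th coordinate projection. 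For each $n$, $\rho(\pi_{n}^{-1}([0,1]\setminus B)) = \lambda(\{x : \iota(x(n))\notin B\}) = \lambda(\emptyset) = 0$ since $\iota$ takes values in $B$; hence $\rho(N) = 0$ by countable subadditivity. Off $N$ one has $\iota^{-1}\circ\mathrm{Id}_{B}$ acting correctly in every coordinate, so $r(s(y)) = y$ for $y\notin N$, giving $r\circ s = \mathrm{Id}$ a.e.\ $d\rho$; and since $N$ is shift-invariant ($\sigma^{-1}N \subseteq N$, indeed $=N$ up to the same null considerations), for $y\notin N$ we get $s(\sigma y)(n) = \iota^{-1}(y(n+1)) = s(y)(n+1) = \sigma(s(y))(n)$, i.e.\ $s\circ\sigma = \sigma\circ s$ a.e.\ $d\rho$. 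Finally, for measure preservation of $s$: for $C\in\mathfrak{D}$, $s^{-1}(C)$ and $r(\,\cdot\,)$ agree with $C$ modulo $N$ in the sense that $s_{*}\rho(C) = \rho(s^{-1}(C)) = \rho(s^{-1}(C)\setminus N)$, and on $[0,1]^{\mathcal{N}}\setminus N$ the maps $r$ and $s$ are mutually inverse Borel bijections onto $\mathcal{R}^{\mathcal{N}}\setminus (\text{a }\lambda\text{-null set})$ — indeed $r^{-1}(\mathcal{R}^{\mathcal{N}}) = [0,1]^{\mathcal{N}}$ and $r$ is injective, so $s^{-1}(C)\setminus N = r(C)$ up to null sets, whence $\rho(s^{-1}(C)) = \rho(r(C)) = \lambda(r^{-1}(r(C))) = \lambda(C)$ using injectivity of $r$. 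Thus $s_{*}\rho = \lambda$, completing the verification that $r$ and $s$ exhibit the required isomorphism of dynamical systems.
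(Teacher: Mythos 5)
Your proof is correct, but it takes a genuinely different route from the paper. The paper invokes the point-isomorphism theorem for standard probability spaces (Petersen, Theorem 1.4.4) to get a mod-$0$ isomorphism $\Phi:({\mathcal R}^{\mathcal N},\mathfrak{D},\lambda)\rightarrow([0,1],\mathfrak{E}',\rho')$ of the \emph{whole} sequence space onto the unit interval, and then defines $r$ as the itinerary map $r(\omega)(n)=\Phi(\sigma^{n}\omega)$, with $s=\Phi^{-1}\circ\pi_{0}$; the conjugacy $r\circ\sigma=\sigma\circ r$ is then automatic from the construction, but $s\circ r=Id$ only holds a.e. You instead code coordinate-by-coordinate through a single Borel embedding $\iota:{\mathcal R}\rightarrow B\subset[0,1]$, which is more elementary: you never need the (nontrivial) isomorphism theorem for Lebesgue spaces, only the fact that ${\mathcal R}$ is Borel isomorphic to $(0,1)$ --- and indeed you could take $\iota$ to be a homeomorphism onto $(0,1)$, making even the appeal to Kuratowski's theorem unnecessary. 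Your $r$ is then injective everywhere, $s\circ r=Id$ holds everywhere, and the only a.e.\ qualifications live on the $[0,1]^{\mathcal N}$ side, where you correctly isolate the null set $N=\bigcup_{n}\pi_{n}^{-1}([0,1]\setminus B)$. One simplification is available in your last step: measure preservation of $s$ follows in one line from $\rho(s^{-1}(C))=\lambda(r^{-1}(s^{-1}(C)))=\lambda((s\circ r)^{-1}(C))=\lambda(C)$, which avoids your detour through $\rho(r(C))$ and hence avoids having to know that the direct image $r(C)$ of a Borel set under the injective Borel map $r$ is Borel (true by Lusin--Souslin, but machinery you do not need). Both constructions yield a space in $\mathcal{Q}$ and conjugate the shifts a.e., so either serves the downstream argument; yours is the lighter of the two.
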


\begin{proof}
There exists an isomorphism, in the sense of measure spaces,  $\Phi:({\mathcal R}^{\mathcal N}, \mathfrak{D}, \lambda)\rightarrow ([0,1],\mathfrak{E}',\rho')$, where $\mathfrak{E}'$ is the Borel field and $\rho'$ is a probability measure, see \cite{Pet}, Theorem 1.4.4. Now define $r:{\mathcal R}^{\mathcal N}\rightarrow[0,1]^{\mathcal N}$ by $r(\omega)(n)=\Phi(\sigma^{n}\omega)$. Again, using the argument above and the fact that $\Phi$ and $\sigma$ are measurable, $r^{-1}(\mathfrak{E})\subset \mathfrak{D}$, where is the Borel field on $[0,1]^{\mathcal N}$. Let $\rho$ be the probability measure $r_{*}\lambda$, so $r:({\mathcal R}^{\mathcal N}, \mathfrak{D}, \lambda)\rightarrow({[0,1]}^{\mathcal N},\mathfrak{E},\rho)$ is also measure preserving. We have that $r(\sigma\omega)(n)=\Phi(\sigma^{n+1}\omega)=(r\omega)(n+1)=\sigma(r\omega)(n)$, so $r\circ\sigma=\sigma\circ r$, for all $\omega\in{\mathcal R}^{\mathcal N}$. This also shows that $\rho$ is $\sigma$ invariant, as $\lambda$ is $\sigma$ invariant. Hence, $({[0,1]}^{\mathcal N},\mathfrak{E},\rho,\sigma)$ belongs to $\mathcal{Q}$. Define $s:({[0,1]}^{\mathcal N},\mathfrak{E},\rho)\rightarrow({\mathcal R}^{\mathcal N}, \mathfrak{D},\lambda)$, by, $s(\omega')=\Phi^{-1}(\pi(\omega'))$, where again $\pi$ is the $0$'th coordinate projection, clearly $s$ is measurable. Then $(s\circ r)(\omega)=\Phi^{-1}\circ\pi\circ r(\omega)$, and $\pi\circ r(\omega)=r(\omega)(0)=\Phi(\omega)$, so $(s\circ r)=Id$ a.e, and, similarly $r\circ\sigma=\sigma\circ r$ a.e d$\lambda$. This clearly shows that $s$ is measure preserving, and that $(r\circ s)=Id$, $s\circ\sigma=\sigma\circ s$,$(*)$, hold, restricted to $r(U)$, where $\lambda(U)=1$. As, by definition, $\rho(\lambda(U))=1$, and the conditions in $(*)$ are measurable, we obtain the result. (Note that the map $s$ need not be invertible in the ordinary sense.)
\end{proof}

We now make the following;\\

\begin{defn}
\label{eight}
Let $({[0,1]}^{\mathcal N},\mathfrak{E},\rho,\sigma)$ belong to ${\mathcal Q}$, then we say that $\alpha$ is typical for $\rho$ if;\\

$lim_{n\rightarrow\infty}\sum_{i=0}^{n-1}g(\sigma^{i}\alpha)=\int_{{[0,1]}^{\mathcal N}}g d\rho$\\

for any $g\in C([0,1]^{\mathcal N})$.\\

\end{defn}

We now show;\\

\begin{theorem}
\label{nine}
Let $({[0,1]}^{\mathcal N},\mathfrak{E},\rho,\sigma)$ belong to ${\mathcal Q}$, possessing a typical element $\alpha$. Then $({[0,1]}^{\mathcal N},\mathfrak{E},\rho,\sigma)$ is a factor of $(K,{\mathfrak B},P,\phi)$ in the sense of Definition \ref{five}.
\end{theorem}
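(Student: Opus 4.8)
The plan is to construct the factor map $\Gamma : K \to [0,1]^{\mathcal N}$ by sampling the orbit of the typical element $\alpha$ along a nonstandard copy. Since $\alpha$ is typical and $[0,1]^{\mathcal N}$ is compact metrizable, the sequence $(\sigma^i \alpha)_{i \in {\mathcal N}}$ has a $^*$-extension to an internal sequence $(\sigma^i \alpha)_{i \in K}$ of points of $^*([0,1]^{\mathcal N})$; composing with the standard part map $\mathrm{st} : {}^*([0,1]^{\mathcal N}) \to [0,1]^{\mathcal N}$, which is well defined everywhere by compactness, I would set $\Gamma(x) = \mathrm{st}({}^*\sigma^x \alpha)$ for $x \in K$. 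The first task is to check that $\Gamma$ is ${\mathfrak B}$-measurable: for a basic open cylinder $U$ in $[0,1]^{\mathcal N}$, the set $\Gamma^{-1}(U)$ differs by a $P$-null set from an internal set (the preimage under the internal map $x \mapsto {}^*\sigma^x\alpha$ of the $^*$-open set $^*U$), by the usual Loeb-measurability argument using inner/outer approximation; this puts $\Gamma^{-1}(U)$ in $\mathfrak B$, and since the cylinders generate $\mathfrak D$ (equivalently $\mathfrak E$) we get measurability of $\Gamma$ on all of $\mathfrak D$.

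Next I would verify the intertwining relation $\Gamma(\phi x) = \sigma(\Gamma x)$. For $x \in K$ with $x < k-1$ we have $\phi x = x+1$, and $^*\sigma^{x+1}\alpha = {}^*\sigma({}^*\sigma^x \alpha)$; since $\sigma : [0,1]^{\mathcal N} \to [0,1]^{\mathcal N}$ is continuous, standard part commutes with it, so $\Gamma(\phi x) = \mathrm{st}({}^*\sigma({}^*\sigma^x\alpha)) = \sigma(\mathrm{st}({}^*\sigma^x\alpha)) = \sigma(\Gamma x)$. This holds for all $x \in K$ except $x = k-1$, a single point, hence $P$-a.e., as required by Definition \ref{five}.

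The main obstacle, and the step deserving the most care, is showing that $\Gamma$ is measure preserving, i.e.\ $\Gamma_* P = \rho$. By Caratheodory it suffices to check $P(\Gamma^{-1}(U)) = \rho(U)$ for $U$ in a generating algebra, and by a monotone-class / Stone--Weierstrass argument it suffices to show $\int_K (g \circ \Gamma)\, dP = \int_{[0,1]^{\mathcal N}} g\, d\rho$ for every $g \in C([0,1]^{\mathcal N})$. Here is where typicality of $\alpha$ enters decisively: for standard $g$, the function $i \mapsto g(\sigma^i \alpha)$ has Cesàro averages converging to $\int g\, d\rho$, so by transfer, for infinite $n \le k$ (in particular $n = k$), $\frac{1}{k}{}^*\!\sum_{i=0}^{k-1} g({}^*\sigma^i \alpha) \simeq \int g\, d\rho$. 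On the other hand, $g$ is uniformly continuous on the compact space $[0,1]^{\mathcal N}$, so $^*g({}^*\sigma^x\alpha) \simeq g(\mathrm{st}({}^*\sigma^x\alpha)) = g(\Gamma x)$ for every $x \in K$; since $g$ is bounded, this approximation is uniform, and one concludes $\int_K (g\circ\Gamma)\, dP = {}^\circ\!\left(\frac{1}{k}{}^*\!\sum_{x=0}^{k-1} {}^*g({}^*\sigma^x\alpha)\right) = \int g\, d\rho$. The delicate points are the interchange of standard part with the hyperfinite sum (justified because $g\circ\Gamma$ is bounded and $\frac1k{}^*\sum {}^*g$ is an $S$-integrable lifting, cf.\ Definition 3.9 and Remarks 3.10 of \cite{dep}) and confirming that the Cesàro-average hypothesis in Definition \ref{eight} does transfer to give the value at the single infinite index $k$ rather than merely along a sequence; both go through, the latter because convergence of a standard sequence transfers to: all infinite hyperindices give values infinitely close to the limit.
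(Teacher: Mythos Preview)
Your proposal is correct and follows essentially the same route as the paper: define $\Gamma(x)={}^{\circ}(\sigma^{x}\alpha)$ using compactness of $[0,1]^{\mathcal N}$, verify the intertwining $\sigma\circ\Gamma=\Gamma\circ\phi$ off the single point $k-1$ via continuity of $\sigma$, and establish measure preservation by showing $\int_{K}(g\circ\Gamma)\,dP=\int g\,d\rho$ for every $g\in C([0,1]^{\mathcal N})$ through the chain (typicality) $\Rightarrow$ (transfer of the Ces\`aro limit to the infinite index $k$) $\Rightarrow$ ($S$-integrability of the bounded lifting), then invoke Riesz uniqueness. The only cosmetic difference is in the measurability step: the paper pulls back continuous functions supported on an open set and cites Theorem~3.8/Lemma~3.15 of \cite{dep}, whereas you sketch a direct Loeb inner/outer approximation for cylinders; your phrasing that $\Gamma^{-1}(U)$ ``differs by a $P$-null set from an internal set'' is slightly loose (for open $U$ one has $\mathrm{st}^{-1}(U)\subseteq{}^{*}U\subseteq\mathrm{st}^{-1}(\overline{U})$, not equality up to null sets in general), but the standard Loeb argument you allude to does yield measurability, so there is no genuine gap.
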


\begin{proof}

Define $\Gamma:K\rightarrow {[0,1]}^{\mathcal N}$ by $\Gamma(x)={^{\circ}(\sigma^{x}\alpha)}$, (\footnote{Here,  $(\sigma^{x}\alpha)={^{*}H}(x)$ for the internal function $^{*}H:{^{*}{\mathcal N}}\rightarrow {^{*}([0,1]^{\mathcal N}) }=(^{*}[0,1])^{^{*}{\mathcal N}}$, obtained by transferring the standard function $H:{\mathcal N}\rightarrow [0,1]^{\mathcal N}$, defined by $H(n)=\sigma^{n}(\alpha)$. Observe that $[0,1]^{\mathcal N}$ is compact and Haussdorff in the product topology, so, by Theorem 2.34 of \cite{dep}, there exists a unique standard part mapping $^{\circ}:{^{*}([0,1]^{\mathcal N})}\rightarrow [0,1]^{\mathcal N}$. In fact, see \cite{Rob}, this mapping is defined by setting $^{\circ}s={(^{\circ}s(n))}_{n\in{\mathcal N}}$ where $s:{^{*}{\mathcal N}}\rightarrow ^{*}[0,1]$ is internal.}). Now suppose that $g\in C([0,1]^{\mathcal N})$, so, as $[0,1]^{\mathcal N}$ is compact, $g$ is bounded,$(*)$, then;\\

$^{\circ}g(\sigma^{x}\alpha)=g(\Gamma(x))$ for all $x\in K$, $(**)$ (\footnote{I have also denoted by $g$, the transfer of $g$ to $^{*}C({^{*}([0,1]^{\mathcal N})})$. Observe that $\sigma^{x}(\alpha)\simeq\Gamma(x)$ by definition of $\Gamma$, it is then straightforward to adapt Theorem 2.25 of \cite{dep}, using the fact that $g$ is continuous, to show that $g(\sigma^{x}\alpha)\simeq g(\Gamma(x))$.}).\\

This implies that $\Gamma$ is measurable, as if $B$ is an open set for the product topology on $[0,1]^{\mathcal N}$, then, taking $g$ to be a continuous function with support $B$, $\Gamma^{*}{g}$ is measurable with respect to $P$, by Theorem 3.8 (Lemma 3.15) of \cite{dep}. This clearly implies that $\Gamma^{-1}(B)$ is measurable. By previous arguments, we obtain the result. Moreover;\\

$\int_{{[0,1]}^{\mathcal N}}g d\rho$\\

$=lim_{n\rightarrow\infty}{1\over n}\sum_{i=0}^{n-1}g(\sigma^{i}\alpha)$, (by definition of a typical element $\alpha$)\\

$=^{\circ}({1\over k}{^{*}\sum_{x=0}^{k-1}g(\sigma^{x}\alpha)})$, (\footnote{Observe that $s(n)={1\over n}\sum_{i=0}^{n-1}g(\sigma^{i}\alpha)$ is a standard sequence, with limit $s=\int_{{[0,1]}^{\mathcal N}}g d\rho$. By Theorem 2.22 of \cite{dep}, using the fact that $k$ is infinite, $s\simeq s(k)$. Using Definition 2.19 of \cite{dep}, it is clear that $s(k)$ is the hyperfinite sum ${1\over k}{^{*}\sum_{x=0}^{k-1}g(\sigma^{x}\alpha)}$}).\\

$={^{\circ}\int_{K}g(\sigma^{x}\alpha) d\nu}$ (using Definition 3.9 of \cite{dep} and Remarks 3.10 of \cite{dep})\\

$=\int_{K}g(\Gamma(x)) dP$, (using $(*)$, $(**)$ and Theorem 3.12 of \cite{dep} (Lemma 3.15 of \cite{dep}))\\

 $(***)$\\

The result of $(***)$ implies that $\Gamma$ is measure preserving. The probability measure $\Gamma_{*}P$ defines a bounded linear functional on $C([0,1]^{\mathcal N})$, which agrees with $\rho$. Using the fact that $[0,1]^{\mathcal N}$ is a compact Hausdorff space, and $\rho, \Gamma_{*}P$ are regular, see \cite{Rud} Theorem 2.18, (\footnote{It is easy to see that $[0,1]^{\mathcal N}$ is $\sigma$-compact. This follows from the fact that finite intersections of cylinder sets form a basis for the topology on $[0,1]^{\mathcal N}$. Any open set in $U$ in ${[0,1]}^{m}$ is a countable union of closed sets, as every $x\in U$ lies inside a closed box $B$ with rational corners, such that $B\subset U$. Hence, any cylinder set is a countable union of such closed sets $\pi_{m}^{-1}(B)$.}), we can apply the uniqueness part of the Riesz Representation Theorem, see \cite{Rud} Theorem 6.19, to conclude that $\Gamma_{*}P=\rho$, we will discuss this further below. Now, as $\sigma$ is continuous with respect to $\mathfrak{E}$, (\footnote{Again I have denoted by $\sigma$ the transfer of the standard shift $\sigma$ to ${^{*}([0,1]^{\mathcal N})}$. The fact that $\sigma(\sigma^{x}\alpha)=\sigma^{x+1}(\alpha)$ follows immediately by transferring the standard fact that $\sigma(\sigma^{n}(\alpha))=\sigma^{n+1}(\alpha)$ for $n\in{\mathcal N}$.}),;\\

$\sigma(\Gamma x)=\sigma(^{\circ}(\sigma^{x}\alpha)=^{\circ}(\sigma(\sigma^{x}\alpha))=^{\circ}(\sigma^{x+1}\alpha)=\Gamma(x+1)=\Gamma(\phi(x))$\\

except for $x=k-1$, so a.e $dP$. Hence, the result follows.

\end{proof}

We now address the problem of finding a typical element for a space $({[0,1]}^{\mathcal N},\mathfrak{E},\rho,\sigma)\in {\mathcal Q}$. By Theorem \ref{three}, Lemma \ref{four}, Lemma \ref{six}, Lemma \ref{seven} and Theorem \ref{nine}, we then obtain the Ergodic Theorem \ref{one}. The proof of this result does \emph{not} require the Ergodic Theorem, and is originally due to de Ville, see \cite{kam}.

\begin{defn}
\label{ten}
We say that a sequence of measures $(\rho_{n})_{n\in \mathcal N}$ converges weakly to $\rho$ if, for all $g\in C([0,1]^{\mathcal N})$;\\

$lim_{n\rightarrow\infty}(\int_{[0,1]^{\mathcal N}}g d\rho_{n}) = \int_{[0,1]^{\mathcal N}}g d\rho$.\\

\end{defn}

We require the following lemma;\

\begin{lemma}
\label{eleven}
Let ${(\alpha_{n})}_{n\in\mathcal{N}}$ be a sequence of periodic, with respect to $\sigma$, elements in $[0,1]^{\mathcal N}$, such that the sequence of probability measures $(\rho_{\alpha_{n}})_{n\in\mathcal{N}}$ converges weakly to $\rho$, where;\\

$\rho_{\alpha_{n}}={1\over c_{n}}(\delta_{\alpha_{n}}+\delta_{\sigma\alpha_{n}}+\ldots+\delta_{\sigma^{c_{n}-1}\alpha_{n}})$\\

$\delta_{\alpha_{n}}$ denotes the probability measure supported on ${\alpha_{n}}$ and $c_{n}$ denotes the period of $\alpha_{n}$. Then there exists a sequence ${(r_{n})}_{n\in\mathcal{N}}$ of positive integers, such that if ${(T_{n})}_{n\in\mathcal{N}}$ is defined by $T_{0}=0$ and $T_{n+1}-T_{n}=c_{n}r_{n}$, the element  $\alpha\in[0,1]^{\mathcal N}$, defined by $\alpha(m)=\alpha_{n}(m-T_{n})$, for $T_{n}\leq m <T_{n+1}$, is typical for $\rho$.

\end{lemma}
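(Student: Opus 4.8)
## Proof proposal

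The plan is to build $\alpha$ by concatenating longer and longer blocks of the periodic points $\alpha_n$, choosing the repetition counts $r_n$ large enough (and growing fast enough) that the Cesàro averages of $g(\sigma^i\alpha)$ are forced to track $\int g\,d\rho$ for every $g$ in a fixed countable dense subset of $C([0,1]^{\mathcal N})$. The periodicity of each $\alpha_n$ is what makes each block contribute almost exactly $\int g\,d\rho_{\alpha_n}$ per period, and weak convergence $\rho_{\alpha_n}\to\rho$ makes those per-block contributions converge to $\int g\,d\rho$.

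First I would fix a countable set $\{g_j\}_{j\in\mathcal N}\subset C([0,1]^{\mathcal N})$ which is dense in the sup norm (possible since $[0,1]^{\mathcal N}$ is compact metrizable, hence $C([0,1]^{\mathcal N})$ is separable); it suffices to prove the typicality condition for each $g_j$, since a uniform-approximation argument together with $\|\sigma^i\|=1$ on averages extends it to all of $C([0,1]^{\mathcal N})$. For a periodic point $\alpha_n$ of period $c_n$ and any $g\in C([0,1]^{\mathcal N})$, one has exactly $\frac1{c_n}\sum_{i=0}^{c_n-1}g(\sigma^i\sigma^t\alpha_n)=\int g\,d\rho_{\alpha_n}$ for every shift $t$, because $\rho_{\alpha_n}$ is $\sigma$-invariant and supported on the orbit; hence a full run of $r_n$ periods starting at position $T_n$ contributes $c_n r_n\int g\,d\rho_{\alpha_n}=(T_{n+1}-T_n)\int g\,d\rho_{\alpha_n}$ to the sum $\sum_{i=0}^{T_{n+1}-1}g(\sigma^i\alpha)$, up to an error bounded by $\|g\|_\infty$ times a boundary term of size at most $c_n$ (coming from the last, possibly incomplete, period window overlapping the block boundary). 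I would then choose $r_n$ recursively: having chosen $r_0,\dots,r_{n-1}$ (hence $T_n$), pick $r_n$ so large that $c_n/(c_n r_n)$ and $T_n/(c_n r_n)$ and $|\int g_j\,d\rho_{\alpha_n}-\int g_j\,d\rho|$-weighted terms are all $<2^{-n}$ for $j\le n$; the last quantity is controlled by weak convergence once $n$ is large, and for small $n$ we simply absorb it. The point of making $T_n$ small relative to $T_{n+1}-T_n$ is that when we evaluate the Cesàro average at a general $N$ with $T_n\le N<T_{n+1}$, the "completed" blocks $0,\dots,n-1$ contribute a vanishing fraction of the total mass $N$, so only the current block (index $n$) and a negligible tail matter.

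Next I would do the endpoint estimate: fix $j$ and $\delta>0$; for $N$ with $T_n\le N<T_{n+1}$ and $n$ large, split $\sum_{i=0}^{N-1}g_j(\sigma^i\alpha)$ as the contribution from $[0,T_n)$ plus the contribution from $[T_n,N)$. The first is $O(T_n)$, hence $O(T_n/N)=O(T_n/T_{n+1})\to 0$ after dividing by $N$. The second, divided by $N$, is approximately $\frac{N-T_n}{N}\int g_j\,d\rho_{\alpha_n}$ up to a boundary error $O(c_n/N)$; since $c_n/N\le c_n/T_n\to 0$ by our choice of $r_{n-1}$ making $T_n\ge c_{n-1}r_{n-1}$ large, and since $\int g_j\,d\rho_{\alpha_n}\to\int g_j\,d\rho$, and $(N-T_n)/N$ can be anything in $[0,1)$ but $T_n/N\to0$ forces it near $1$, we conclude $\frac1N\sum_{i=0}^{N-1}g_j(\sigma^i\alpha)\to\int g_j\,d\rho$. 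Finally, given arbitrary $g\in C([0,1]^{\mathcal N})$ and $\eta>0$, pick $g_j$ with $\|g-g_j\|_\infty<\eta/3$; then $|\frac1N\sum g(\sigma^i\alpha)-\int g\,d\rho|\le 2\|g-g_j\|_\infty+|\frac1N\sum g_j(\sigma^i\alpha)-\int g_j\,d\rho|<\eta$ for large $N$, giving typicality of $\alpha$.

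The main obstacle is the bookkeeping in the endpoint estimate: one must simultaneously control three competing error sources — the "junk" from all earlier completed blocks, the incomplete-period boundary error within the current block, and the gap $|\int g_j\,d\rho_{\alpha_n}-\int g_j\,d\rho|$ — and verify that a single recursive choice of the $r_n$ (depending only on the data available at stage $n$) makes all of them go to zero uniformly over the relevant range of $N$ and over the first $n$ test functions. The subtlety is that $N$ ranges over a whole interval $[T_n,T_{n+1})$, so the bound must be uniform in that interval; this is exactly why we need $T_n$ negligible compared to $T_{n+1}-T_n=c_nr_n$, i.e. why $r_n$ must grow fast relative to the accumulated length $T_n$, and it is the one place where the argument is genuinely a diagonalization rather than a routine limit.
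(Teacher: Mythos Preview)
Your overall architecture---concatenate long runs of the $\alpha_n$, make the block lengths grow fast so that earlier blocks are negligible, diagonalize over a countable dense family $\{g_j\}$, and then pass to all of $C([0,1]^{\mathcal N})$ by density---is sound and is essentially the same skeleton as the paper's argument. But there is a genuine gap in your core estimate, and it is precisely the point where the real work lies.

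You assert that ``a full run of $r_n$ periods starting at position $T_n$ contributes $c_nr_n\int g\,d\rho_{\alpha_n}$ to the sum $\sum_{i=0}^{T_{n+1}-1}g(\sigma^i\alpha)$, up to an error bounded by $\|g\|_\infty$ times a boundary term of size at most $c_n$.'' This is not justified. The identity $\frac{1}{c_n}\sum_{i=0}^{c_n-1}g(\sigma^{i+t}\alpha_n)=\int g\,d\rho_{\alpha_n}$ holds for the \emph{periodic} point $\alpha_n$, but in the sum you actually need you are evaluating $g$ at $\sigma^i\alpha$, not at $\sigma^{i-T_n}\alpha_n$. These two points of $[0,1]^{\mathcal N}$ agree only on their first $T_{n+1}-i$ coordinates; beyond that, $\sigma^i\alpha$ reads from the block of $\alpha_{n+1}$ while $\sigma^{i-T_n}\alpha_n$ wraps around periodically. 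Since $g$ in general depends on all coordinates, $g(\sigma^i\alpha)$ and $g(\sigma^{i-T_n}\alpha_n)$ differ for \emph{every} $i$ in the block, not merely for the last $c_n$ values; and since $T_{n+1}-T_n=c_nr_n$ is an exact multiple of $c_n$, there is in fact no ``incomplete period window'' at all. So both the size and the source of the error are misidentified.

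The paper repairs exactly this point with a modulus-of-continuity argument: by compactness of $[0,1]^{\mathcal N}$, for each $f$ one chooses integers $g_n\to\infty$ so that the oscillation $Q_n=\sup\{|f(b)-f(c)|:\pi_{g_n}(b)=\pi_{g_n}(c)\}$ tends to $0$; then for $i$ at distance at least $g_{n}$ from the block boundary one has $|f(\sigma^i\alpha)-f(\sigma^{i-T_n}\alpha_n)|\le Q_n$, while the remaining $g_n$ indices near the boundary contribute at most $2M g_n$, which is made negligible by choosing $T_{n+1}-T_n$ to be a large multiple of $g_n$ (this is the content of the paper's estimates $(B)$, $(C)$, $(D)$ and conditions $(ii)$, $(iii)$). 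Your proof can be salvaged in either of two ways: take your dense family $\{g_j\}$ to consist of functions depending on only finitely many coordinates (say the first $d_j$), in which case the boundary error really is $O(d_j)$ per block and your estimate goes through with $d_j$ in place of $c_n$; or else insert the modulus-of-continuity step and let the ``boundary window'' width depend on $j$ and the stage $n$, which forces you to make $r_n$ large relative to that width as well. Either way, the recursive choice of $r_n$ must absorb a quantity coming from the continuity of the test functions, not from the period $c_n$.
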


\begin{proof}
The proof is intuitively clear, but hard to write down rigorously. As $\rho_{\alpha_{n}}$ converges weakly to $\rho$, we have that;\\

$lim_{n\rightarrow\infty}(\int_{X} f d \rho_{\alpha_{n}})=\int_{X}f d \rho$\\

By definition of $\rho_{\alpha_{n}}$;\\

$\int_{X} f d \rho_{\alpha_{n}}={1\over c_{n}}(f(\alpha_{n})+\ldots+f(\sigma^{c_{n}-1}\alpha_{n})$\\

So it is sufficient to prove that;\\

$lim_{n\rightarrow\infty}{1\over n}\sum_{i=0}^{n-1} f(\sigma^{i}\alpha)=lim_{n\rightarrow\infty}{1\over c_{n}}(f(\alpha_{n})+\ldots+f(\sigma^{c_{n}-1}\alpha_{n}))$ $(*)$\\

We first claim that, if $f\in C([0,1]^{\mathcal{N}})$, there exists an increasing sequence $\{m_{n}\}_{n\in{\mathcal N}}$ of positive integers, such that if $b,c\in{[0,1]^{\mathcal N}}$, and agree up to the $m_{n}$'th coordinate, then $|f(b)-f(c)|<{1\over n}$, $(**)$. In order to see this, for $x\in {[0,1]^{\mathcal N}}$, let $U_{x}=\{y:|f(x)-f(y)|<{1\over 2n}\}$. As $f$ is continuous, $U_{x}$ is open in the Borel field, hence there exists $V_{x}\subset U_{x}$, containing $x$,  of the form $\pi^{-1}(W_{x})$, where $W_{x}\subset {\mathcal R}^{n_{x}}$ is open, and $\pi$ is the projection onto the first $n_{x}$ coordinates. Then, if $y,z\in U_{x}$, $|f(y)-f(z)|\leq |f(y)-f(x)|+|f(z)-f(x)|<{1\over n}$. The sets $\{V_{x}:x\in X\}$ form an open cover of ${[0,1]^{\mathcal{N}}}$, which is compact in the product topology. Hence, there exists a finite subcover $V_{x_{1}}\cup\ldots\cup V_{x_{r}}$. We can choose $m_{n}$ such that each $V_{x_{j}}$ is of the form $\pi^{-1}(W_{x_{j}})$, for $W_{x_{j}}\subset {\mathcal R}^{m_{n}}$. Then, if $b$ and $c$ agree up to the $m_{n}$'th coordinate, we have that $b\in V_{x_{j}}$ iff $c\in V_{x_{j}}$, so $|f(b)-f(c)|<{1\over n}$, showing $(**)$. Now let $\{g_{n}\}_{n\in\mathcal{N}}$ be any increasing sequence of positive integers, such that if $Q_{n}=sup\{|f(b)-f(c)|: \pi_{g_{n}}(b)=\pi_{g_{n}}(c)\}$, then $\{Q_{n}\}_{n\in\mathcal{N}}$ is decreasing and $lim_{n\rightarrow\infty}Q_{n}=0$. Clearly such a sequence exists by $(**)$. Without loss of generality, we can choose $\{g_{n}\}_{n\in\mathcal{N}}$, such that the periods $c_{n}|g_{n}$, $(\sharp)$. Now choose $\{T_{i}\}_{i\in\mathcal{N}}$ as follows;\\

$(i)$. $T_{i+1}\geq 2^{i}T_{i}$\\

$(ii)$. $g_{i}|T_{i+1}-T_{i}$  (so $c_{i}|T_{i+1}-T_{i}$)\\

$(iii)$. $C_{i}={T_{i+1}-T_{i}\over g_{i}}\gneq C_{i-1}={T_{i}-T_{i-1}\over g_{i-1}}$ $(i\geq 1)$.\\

$(iv)$. $T_{i}\geq 2^{i}c_{i}$ $(i\geq 1)$.\\

We now claim there exists a decreasing sequence $\{b_{n}\}_{n\in{\mathcal N}_{>0}}$ of positive reals, such that;\\

$|{1\over T_{n}}\sum_{i=0}^{T_{n}-1}f(\sigma^{i}\alpha) - t_{n}| \leq b_{n}$ $(***)$\\

where $lim_{n\rightarrow\infty}b_{n}=0$, and $t_{n}={1\over c_{n}}(f(\alpha_{n})+\ldots+f(\sigma^{c_{n}-1}\alpha_{n})$, for $n\geq 1$. For ease of notation, we let;\\

$A_{n}={1\over n}\sum_{i=0}^{n-1} f(\sigma^{i}\alpha)$\\

$A_{m,n}={1\over n-m}\sum_{i=m}^{n-1}f(\sigma^{i}\alpha)$\\

Recall the law of weighted averages, $A_{n}={mA_{m}+(n-m)A_{m,n}\over n}$. We first estimate $|A_{T_{n}}-A_{T_{n-1},T_{n}}|$. We have;\\

$A_{T_{n}}={T_{n-1}A_{T_{n-1}}+(T_{n}-T_{n-1})A_{T_{n-1},T_{n}}\over T_{n}}$\\

$|A_{T_{n}}-A_{T_{n-1},T_{n}}|$\\

$=|{T_{n-1}\over T_{n}}A_{T_{n-1}} + {T_{n}-T_{n-1}\over T_{n}}A_{T_{n-1},T_{n}} - A_{T_{n-1},T_{n}}|$\\

$\leq {|A_{T_{n}-1}|\over 2^{n-1}} + {|A_{T_{n-1},T_{n}}|\over 2^{n-1}}$ by $(i)$\\

$\leq {M\over 2^{n-2}}$, where $|f|\leq M$,   $(A)$\\

We now estimate the average $A_{T_{n-1},T_{n}}$. The idea is to divide the interval between $T_{n-1}$ and $T_{n}$ into $C_{n-1}$ blocks of length $g_{n-1}$, where the period $c_{n-1}|g_{n-1}$, using $(\sharp)$ and $(ii)$. We estimate $|A_{T_{n-1},T_{n}}-A_{T_{n-1},T_{n}-g_{n}}|$;\\

$A_{T_{n-1},T_{n}}={C_{n-1}-1\over C_{n-1}}A_{T_{n-1},T_{n}-g_{n-1}} + {1\over C_{n-1}}A_{T_{n}-g_{n-1},T_{n}}$\\

$|A_{T_{n-1},T_{n}}-A_{T_{n-1},T_{n}-g_{n-1}}|$\\

$=|{A_{T_{n}-g_{n-1},T_{n}}\over C_{n-1}} - {A_{T_{n-1},T_{n}-g_{n-1}}\over C_{n-1}}| \leq {2M\over C_{n-1}}$ $(B)$\\

We now let;\\

$B_{T_{n-1},m}={1\over m-T_{n-1}}\sum_{i=0}^{m-T_{n-1}-1} f(\sigma^{i}\alpha_{n-1})$, for $m\leq n$.\\

 We estimate $|A_{T_{n-1},T_{n}-g_{n-1}}-B_{T_{n-1},T_{n}-g_{n}}|$. We have that $\sigma^{T_{n-1}+i}\alpha$ and $\sigma^{i}\alpha_{n-1}$ agree up to the $g_{n-1}$'th coordinate, for $0\leq i < T_{n}-T_{n-1}-g_{n-1}$. Therefore, for such $i$, $|f(\sigma^{i}\alpha_{n-1})-f(\sigma^{T_{n-1}+i}\alpha)|\leq Q_{n-1}$, and so;\\

$|A_{T_{n-1},T_{n}-g_{n-1}}-B_{T_{n-1},T_{n-1}-g_{n-1}}| \leq Q_{n-1}$   $(C)$\\

Now, by the same argument as in $(B)$;\\

$|B_{T_{n-1},T_{n}}-B_{T_{n-1},T_{n}-g_{n}}| \leq {2M\over C_{n-1}}$   $(D)$\\

Finally, by periodicity;\\

$B_{T_{n-1},T_{n}}={1\over c_{n-1}}(f(\alpha_{n-1})+\ldots+ f(\sigma^{c_{n-1}-1}\alpha_{n-1}))=t_{n}$   $(E)$\\

Now, combining the estimates $(A),(B),(C),(D),(E)$, we have;\\

$|A_{T_{n}}-t_{n}| \leq {M\over 2^{n-2}} +{2M\over 2^{n-2}} + Q_{n-1} + {2M\over C_{n-1}}=b_{n}$\\

Clearly $\{b_{n}\}_{n\in\mathcal{N}}$ is decreasing. Moreover, $lim_{n\rightarrow\infty} b_{n}=0$, as $lim_{n\rightarrow\infty} C_{n}=\infty$, $(iii)$, and by the choice of $\{Q_{n}\}_{n\in\mathcal{N}}$. This shows $(***)$.
We now have to estimate the averages up to place between the critical points $T_{n}$ and $T_{n+1}$.\\

Case 1. The place $v$ is a periodic point of the form;\\

$T_{n}+mg_{n}$, where $0\leq m\leq C_{n}-1$\\

We have $A_{v}=\lambda A_{T_{n}}+(1-\lambda)A_{T_{n},v}$ $(0\leq\lambda\leq 1)$, where $|A_{T_{n},v}-t_{n+1}|\leq Q_{n}$, by $(C),(E)$, and $|A_{T_{n}}-t_{n}|\leq b_{n}$, by $(***)$. Now, let $t=lim_{n\rightarrow\infty}t_{n}$. Given $\epsilon>0$, choose $N(\epsilon)$, such that $|t_{n}-t|<\epsilon$, for all $n\geq N(\epsilon)$. Then;\\

$|A_{v}-t|\leq max\{|A_{T_{n}}-t|,|A_{T_{n},v}-t|\}$\\

$\leq max\{b_{n}+{\epsilon\over 2}, Q_{n}+{\epsilon\over 2}\}$\\

Choose $N_{1}(\epsilon)\geq N(\epsilon)$, such that $max\{b_{n},Q_{n}\}<{\epsilon\over 2}$, for all $n\geq N_{1}(\epsilon)$, then $|A_{v}-t|<\epsilon$, for all $n\geq N_{1}(\epsilon)$.\\

Case 2. The place $v$ is a possibly non-periodic point of the form;\\

 $T_{n}+w$, where $0\leq w\leq T_{n+1}-T_{n-1}-g_{n}$.\\

Choose periodic points $v_{1}$ and $v_{2}$, with $T_{n}\leq v_{1}\leq v\leq v_{2}\leq T_{n+1}-g_{n}$, and $v_{2}-v_{1}=c_{n}$, so $0\leq v-v_{1}=e\leq c_{n}$. Then $A_{v}={v_{1}\over v_{1}+e}A_{v_{1}}+{e\over v_{1}+e}A_{v_{1},v}$. As $v_{1}\geq T_{n}$, we have;\\

${e\over v_{1}+e}\leq {e\over T_{n}+e}\leq {c_{n}\over T_{n}}\leq {1\over 2^{n}}$ by $(iv)$.\\

Therefore;\\

$|A_{v}-A_{v_{1}}|=|(1-\delta)A_{v_{1}}+\delta A_{v_{1},v}-A_{v_{1}}|$, $(\delta\leq {1\over 2^{n}})$\\

$\leq \delta(|A_{v_{1}}|+|A_{v_{1},v}|)\leq {M\over 2^{n-1}}$\\

For $n\geq N_{1}({\epsilon\over 2})$, $|A_{v_{1}}-t|<{\epsilon\over 2}$, by Case 1, so $|A_{v}-t|<\epsilon$, for $n\geq N_{2}(\epsilon)$, where $N_{2}(\epsilon)=max\{N_{1}({\epsilon\over 2}), log({2M\over\epsilon})+2\}$.\\

Case 3. The place $v$ is of the form;\\

$T_{n}+w$, where $T_{n+1}-T_{n}-g_{n}\leq w\leq T_{n+1}-T_{n}$.\\

We have;\\

$A_{v}=\lambda A_{T_{n}}+(1-\lambda)A_{T_{n},v}$, $(0\leq\lambda\leq 1)$, $(\dag)$,\\

$A_{T_{n},T_{n+1}}=\mu A_{T_{n},v}+(1-\mu) A_{v,T_{n+1}}$, ${C_{n}-1\over C_{n}}\leq\mu\leq 1$\\

 Therefore;\\

$|A_{T_{n},T_{n+1}}-A_{T_{n},v}| \leq {2M\over C_{n}}$\\

$|A_{T_{n},T_{n+1}}-t_{n+1}|\leq b_{n+1}$, by $(B),(C),(D),(E)$\\

$|A_{T_{n},v}-t_{n+1}|\leq {2M\over C_{n}}+b_{n+1}$\\

$|A_{T_{n}}-t_{n}|\leq b_{n}$ by $(***)$\\

$|A_{v}-t|\leq max\{|A_{T_{n}}-t|,|A_{T_{n},v}-t|\}$ by $(\dag)$\\

$\leq max\{b_{n}+|t_{n}-t|,{2M\over C_{n}}+b_{n+1}+|t_{n+1}-t|\}$, $(\dag\dag)$\\

We have, for $n\geq N({\epsilon\over 2})$, $max\{|t_{n}-t|,|t_{n+1}-t|\}<{\epsilon\over 2}$. Choose $N_{3}(\epsilon)$, such that $max\{b_{n},{2M\over C_{n}}+b_{n+1}\}<{\epsilon\over 2}$, for all $n\geq N_{3}(\epsilon)$. Then, for $n\geq N_{3}(\epsilon)$, $|A_{v}-t|<\epsilon$.\\

To complete the proof, let $N_{4}(\epsilon)=max\{N_{1}(\epsilon),N_{2}(\epsilon),N_{3}(\epsilon)\}$. Then, for $n\geq N_{4}(\epsilon)$, $|A_{m}-t|<\epsilon$, for all $m\geq T_{n}$, by Cases 1,2 and 3. Therefore;\\

$lim_{m\rightarrow\infty}{1\over m}\sum_{i=0}^{m-1}f(\sigma^{i}\alpha)=\int_{X} f d\rho$\\

so $\alpha$ is typical, as required.

\end{proof}

We now formulate the following criteria. \\

\begin{lemma}
\label{twelve}
Suppose that for every $g\in C([0,1]^{\mathcal N})$, and $\epsilon>0$, there exists a periodic element $\beta\in [0,1]^{\mathcal N}$, with;\\

$|\int_{[0,1]^{\mathcal N}}g d\rho_{\beta} - \int_{[0,1]^{\mathcal N}}g d\rho| < \epsilon$\\

then there exists a sequence of periodic elements ${(\alpha_{n})}_{n\in\mathcal{N}}$, with $(\rho_{\alpha_{n}})_{n\in\mathcal{N}}$ converging weakly to $\rho$.

\end{lemma}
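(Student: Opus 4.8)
The statement is essentially a diagonalisation/metrizability argument: $C([0,1]^{\mathcal N})$ with the sup norm is separable (since $[0,1]^{\mathcal N}$ is compact metrizable), so weak convergence of probability measures is controlled by countably many test functions. First I would fix a countable dense set $\{g_k\}_{k\in\mathcal N}$ in $C([0,1]^{\mathcal N})$; it suffices to make the averages $\int g_k \, d\rho_{\alpha_n}$ converge to $\int g_k \, d\rho$ for each $k$, since the $\rho_{\alpha_n}$ all have total mass $1$ and an $\epsilon/3$-argument then upgrades convergence on the dense set to convergence on all of $C([0,1]^{\mathcal N})$.

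Second, I would run a straightforward diagonal construction. For each $n$, apply the hypothesis of Lemma \ref{twelve} finitely many times — or rather once, after replacing the single test function $g$ in the hypothesis by the finite family $g_1,\dots,g_n$. The cleanest route is: given $g_1,\dots,g_n$ and $\epsilon = 1/n$, I want a single periodic $\alpha_n$ with $|\int g_j\, d\rho_{\alpha_n} - \int g_j\, d\rho| < 1/n$ for all $j \le n$. The hypothesis as literally stated only gives this for one $g$ at a time, so the small technical point is to note that it also gives it for any finite linear combination (hence, by a compactness/partition-of-unity or just linear-algebra argument over the finitely many $g_j$, for the whole finite tuple simultaneously) — alternatively one reformulates the hypothesis in terms of the finitely many functions at once, which is the natural reading. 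Setting $\alpha_n$ to be the resulting periodic element, the sequence $(\rho_{\alpha_n})_{n\in\mathcal N}$ then satisfies $\lim_{n\to\infty} \int g_j\, d\rho_{\alpha_n} = \int g_j\, d\rho$ for every fixed $j$ (since for $n \ge j$ the error is at most $1/n$), and by the separability reduction of the first step, $(\rho_{\alpha_n})_{n\in\mathcal N}$ converges weakly to $\rho$ in the sense of Definition \ref{ten}.

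The genuinely substantive ingredient is the separability of $C([0,1]^{\mathcal N})$ in the sup norm, equivalently the metrizability of the product topology on the compact space $[0,1]^{\mathcal N}$; both are standard, and the footnote machinery already in the paper (finite intersections of cylinder sets form a countable-ish basis, rational boxes, etc.) contains exactly what is needed to exhibit a countable dense family — e.g. polynomials with rational coefficients in finitely many coordinates, via Stone--Weierstrass. So I do not expect a real obstacle here; the main thing to get right is the bookkeeping that turns "one test function at a time" in the hypothesis into "finitely many at once", and then the diagonal indexing so that the errors genuinely tend to $0$.

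**Main obstacle.** The only delicate point is the passage from the single-function hypothesis to simultaneous control of a finite tuple $g_1,\dots,g_n$; everything else (separability, the $\epsilon/3$ upgrade, the diagonal extraction) is routine. If one is unwilling to re-read the hypothesis as applying to finite tuples, the fix is to apply it to a single function $g$ that encodes the tuple — but since the conclusion of Lemma \ref{twelve} is only weak convergence against \emph{each} $g$ separately, and the $\rho_{\alpha_n}$ are uniformly bounded, even the one-function-at-a-time version suffices provided one diagonalises correctly over the countable dense set, choosing at stage $n$ a periodic $\beta$ good for $g_n$ to within $1/n$ and simply \emph{also} re-using earlier choices is not needed — so in fact the cleanest writeup avoids the finite-tuple issue entirely by working with a single enumerated dense sequence and a careful diagonal. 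I would present that version.
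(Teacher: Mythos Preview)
Your approach --- separability of $C(X)$ via Stone--Weierstrass, a countable dense family of test functions, then a diagonal choice of $\alpha_n$ --- is essentially the paper's. The paper phrases it as building a countable neighborhood base for the weak-$*$ topology at $\rho$ from sets $U_{h,q}=\{\rho':|\rho'(h)-\rho(h)|<q\}$ with $h$ in a countable dense subset of $C(X)$ and $q$ rational, and then picks $\rho_{\alpha_n}$ in the intersection $J(n)$ of the first $n$ such sets; this is your diagonal in different language.

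You are right that the only non-routine point is upgrading the single-function hypothesis to simultaneous control of finitely many $g_j$, and in fact the paper glosses over exactly this step: choosing $\rho_{\alpha_n}\in J(n)$ already presupposes the finite-tuple version. But your final claim, that one can sidestep the issue by choosing at stage $n$ a periodic $\beta$ good for the single function $g_n$ to within $1/n$, is wrong. That choice gives $|\rho_{\alpha_n}(g_n)-\rho(g_n)|<1/n$ but says nothing whatsoever about $\rho_{\alpha_n}(g_k)$ for $k\ne n$, so there is no reason $\int g_k\,d\rho_{\alpha_n}\to\int g_k\,d\rho$ for any fixed $k$, and the $\epsilon/3$ upgrade never gets off the ground. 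The linear-combination remark does not rescue it either: the hypothesis hands you, for each coefficient vector $(c_1,\dots,c_n)$, a $\beta$ controlling $\sum c_i g_i$, but that $\beta$ may vary with the coefficients, and one cannot in general extract a single $\beta$ good for all the $g_i$ separately (sub-basic closure is strictly weaker than closure). The correct fix is the one you list as an alternative: read the hypothesis as applying to finite tuples, i.e.\ to basic weak-$*$ neighborhoods of $\rho$. This is harmless in the paper's overall logic because what is actually established downstream --- Lemma~\ref{fourteen} together with Theorem~\ref{fifteen} --- controls an entire partition $C_{m,n}$ at once and hence delivers the finite-tuple form directly.
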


\begin{proof}
We abbreviate ${[0,1]}^{\mathcal N}$ to $X$. Let $\mathcal{M}$ denote the vector space of real valued regular measures on $(X,\mathfrak{E})$. As we observed every probability measure belongs to $\mathcal{M}$. $\mathcal{M}$ is a Banach space, with norm defined by total variation, see \cite{Rud}. Using the Riesz Representation Theorem, $\mathcal{M}$ can be identified with the dual space $C(X)^{*}$. It is easy to see that then $\mathcal{M}\cong C(X)^{*}$, as Banach spaces, however, we will not require this fact. The weak $*$-topology, see \cite{boll}, on $\mathcal{M}$, is the coursest topology for which all the elements $\hat{g}\in C(X)^{**}$, where $g\in C(X)$, are continuous. Formally, we define a set $U\subset{\mathcal M}$ to be open if for all $\rho\in U$, there exist $\{g_{1},\ldots,g_{n}\}\subset C(X)$, and positive reals $\{\epsilon_{1},\ldots,\epsilon_{n}\}$ such that;\\

$\{\rho'\in {\mathcal M}: |\rho'(g_{i})-\rho(g_{i})| < \epsilon_{i}\}\subset U$\\

Fixing $\rho$, let $\Omega_{\rho}$ denote the open sets containing $\rho$. We show that $\Omega_{\rho}$ has a countable base, $(*)$. Using the compactness argument, given in Lemma \ref{eleven}, and the Stone-Weierstrass Theorem, see \cite{boll}, it is easy to show that the space $V$ of pullbacks of polynomial functions on $[0,1]^{n}$, for some $n$, is dense in $C(X)$. Clearly $V$ has a countable basis, which shows that $C(X)$ is separable, that is, contains a countable dense subset $Y$. Now suppose that $g\in C(X)$, $\epsilon>0$. Let $U_{g,\epsilon}=\{\rho':|\rho'(g)-\rho(g)| < \epsilon\}$, and $D\in{\mathcal Q}$. Choose $\delta\in{\mathcal Q}$ with $\delta<{\epsilon\over 2(D+2|\rho(X)|)}$, and $\gamma\in{\mathcal Q}$ with $\gamma <{\epsilon\over 2}$. Choose $h\in Y$ with $||g-h||_{C(X)}<\delta$. Then $U_{h,\gamma}\cap U_{1,D}\subset U_{g,\epsilon}$, $(**)$, as if $|\rho'(h)-\rho(h)|<\gamma$, then;\\

$|\rho'(g)-\rho(g)|=|\rho'(g-h)+\rho'(h)-\rho(g-h)-\rho(h)|\leq \delta(|\rho'(X)|+|\rho(X)|) + \gamma $\\

and, if $|\rho'(1)-\rho(1)|<D$, then $|\rho'(X)|+|\rho(X)|<D+2|\rho(X)|$, so $|\rho'(g)-\rho(g)|<\epsilon$. This clearly shows $(**)$. As sets of the form $U_{h,q}\in\Omega_{\rho}$, for $h\in Y$, and $q\in{\mathcal Q}$, are countable, we clearly have $(*)$. Let $I:\mathcal{N}\rightarrow\Omega_{\rho}$ be an enumeration of the sets $U_{h,q}$, and let $J:\mathcal{N}\rightarrow\Omega_{\rho}$ define the intersection of the first $n$ elements in $I$. If the assumption in the lemma is satisfied, we can define a sequence of probability measures $(\rho_{\alpha_{n}})_{n\in\mathcal{N}}$, by taking $\rho_{\alpha_{n}}$ to lie inside the open set $J(n)$. Then clearly such a sequence converges to $\rho$ in the weak $*$-topology, hence, for any $g\in C(X)$, as $g$ is continuous for this topology $lim_{n\rightarrow\infty}\rho_{\alpha_{n}}(g)=\rho(g)$. Therefore, the sequence $(\rho_{\alpha_{n}})_{n\in\mathcal{N}}$ converges weakly to $\rho$.
\end{proof}

We refine this criteria further;\\

\begin{defn}
\label{thirteen}
Given a positive integer $m$, we define the partition $E_{m}$ of $[0,1]$ to consist of the sets;\\

$E_{j,m}=[{j\over m},{j+1\over m})$ for $j$ an integer between $0$ and $m-2$\\

$E_{m-1,m}=[{m-1\over m},1]$\\

Given positive integers $m,n$, we define the partition $B_{m,n}$ of $[0,1]^{n}$ to consist of the sets;\\

$B_{\bar j,m,n}=E_{j_{0},m}\times E_{j_{1},m}\times\ldots\times E_{j_{n-1},m}$\\

where $\bar j=(j_{0},j_{1},\ldots,j_{n-1})$ and $\{j_{0},\ldots,j_{n-1}\}$ are integers between $0$ and $m-1$.\\

We define the partition $C_{m,n}$ of $[0,1]^{\mathcal N}$ to consist of the sets;\\

$C_{\bar j,m,n}=\pi_{n}^{-1}(B_{\bar j,m,n})$\\

where $\pi_{n}$ is the projection onto the first $n$ coordinates.

\end{defn}

\begin{lemma}
\label{fourteen}
Let $\epsilon>0$, $g\in C(X)$ be given as in Lemma \ref{twelve}, and let $\rho'$ be a regular Borel measure, then there exist positive integers $m,n$, and $\delta>0$, such that, if;\\

$|\rho'(C_{\bar j,m,n})-\rho(C_{\bar j,m,n})| <\delta$\\

for all sets $C_{\bar j,m,n}$ belonging to $C_{m,n}$, then;\\

$|\int_{[0,1]^{\mathcal N}}g d\rho' - \int_{[0,1]^{\mathcal N}}g d\rho| < \epsilon$\\

\end{lemma}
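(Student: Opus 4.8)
The plan is to exploit uniform continuity of $g$ on the compact space $X = [0,1]^{\mathcal N}$, combined with the fact that the partitions $C_{m,n}$ generate the topology, to approximate $g$ by a function that is constant on each cell $C_{\bar j, m, n}$.

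First I would recall the compactness argument from the proof of Lemma~\ref{eleven}: given $g \in C(X)$ and $\epsilon > 0$, there is a positive integer $n$ such that whenever $b, c \in X$ agree on their first $n$ coordinates, $|g(b) - g(c)| < \epsilon/4$. Next, since each coordinate lives in the compact metric space $[0,1]$, uniform continuity of $g$ restricted to the (compact) slice determined by fixing nothing but bounding the first $n$ coordinates lets me choose $m$ large enough that whenever $b, c$ lie in a common cell $B_{\bar j, m, n}$ of the partition $B_{m,n}$ of $[0,1]^n$ — i.e. each of the first $n$ coordinates of $b$ and $c$ lands in the same sub-interval $E_{j_i, m}$ of width $1/m$ — we again have $|g(b) - g(c)| < \epsilon/4$. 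Combining, whenever $b, c \in C_{\bar j, m, n}$ we get $|g(b) - g(c)| < \epsilon/2$.

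Then I would fix, for each multi-index $\bar j$, a representative point $p_{\bar j} \in C_{\bar j, m, n}$ (the cells are nonempty and there are finitely many, namely $m^n$ of them), and define the simple function $s = \sum_{\bar j} g(p_{\bar j}) \chi_{C_{\bar j, m, n}}$, so that $\|g - s\|_{C(X)} \le \epsilon/2$. Since the cells $C_{\bar j, m, n}$ partition $X$, for any probability (more generally, finite regular Borel) measure $\mu$ we have $\int_X s \, d\mu = \sum_{\bar j} g(p_{\bar j}) \mu(C_{\bar j, m, n})$, and $|\int_X g \, d\mu - \int_X s \, d\mu| \le (\epsilon/2)\mu(X)$. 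Applying this to both $\rho$ and $\rho'$, the triangle inequality gives
$$
\Bigl| \int_X g \, d\rho' - \int_X g \, d\rho \Bigr| \le \frac{\epsilon}{2}\rho'(X) + \frac{\epsilon}{2}\rho(X) + \sum_{\bar j} |g(p_{\bar j})| \, |\rho'(C_{\bar j, m, n}) - \rho(C_{\bar j, m, n})|.
$$
Now choosing $\delta$ small enough — explicitly $\delta < \dfrac{\epsilon}{2 m^n (1 + \max_{\bar j}|g(p_{\bar j})|)}$, say — and using that $g$ is bounded on $X$ and that $\rho, \rho'$ are probability measures (or at least have controlled total mass, handled as in Lemma~\ref{twelve} by also including the cell-sum for $g \equiv 1$), forces the right-hand side below $\epsilon$.

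The main obstacle is a bookkeeping subtlety rather than a conceptual one: the cells $C_{\bar j, m, n}$ are neither open nor closed (the $E_{j,m}$ are half-open intervals), so I must be careful that $s$ is genuinely $\mathfrak{E}$-measurable — which it is, since half-open intervals are Borel and $\pi_n$ is continuous — and that the $\epsilon/4$-oscillation bound really does hold across a whole half-open cell, for which the uniform-continuity choice of $m$ (not just continuity at representative points) is essential. One should also note the measure $\rho'$ in the statement need only be a regular Borel measure, not a probability measure, so the $\rho'(X)$ term must be absorbed; this is exactly the role played by the auxiliary constant $D$ bounding $|\rho'(X) - \rho(X)|$ in Lemma~\ref{twelve}, and I would either carry such a constant here or simply absorb $\rho'(X)$ into the constants since in the intended application $\rho'$ will be one of the $\rho_\beta$, hence a probability measure.
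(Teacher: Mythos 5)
Your proposal is correct and follows essentially the same route as the paper: both arguments use compactness/uniform continuity of $g$ on $[0,1]^{\mathcal N}$ to make the oscillation of $g$ small on each cell of $C_{m,n}$, then compare the two integrals cell by cell, bounding the discrepancy by (oscillation)$\times$(total mass) plus $\sup|g|\cdot m^{n}\delta$, with $\rho'(X)\leq 1+\delta m^{n}$ controlled directly from the hypothesis. The one adjustment needed is in your $\epsilon$-budget: since you pay the oscillation term twice (once for $\rho'$ and once for $\rho$), your displayed bound comes out to roughly $\epsilon$ plus the $\delta$-terms rather than strictly below $\epsilon$, so you should take the per-cell oscillation below $\epsilon/4$ instead of $\epsilon/2$ (the paper sidesteps this doubling by comparing via $\inf$ and $\sup$ of $g$ on each cell rather than a single representative value), after which your choice of $\delta$ closes the argument.
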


\begin{proof}
For a positive integer $n$, let $W_{n}$ consist of the inverse images in $X$ (from the projection $\pi_{n}$) of open boxes in $[0,1]^{n}$, with rational corners. Let $W=\bigcup_{n\in\mathcal{N}}W_{n}$. It is clear that $W$  forms a countable basis for the topology on ${[0,1]}^{\mathcal N}$. Adapting the compactness argument, given above in Lemma \ref{eleven}, for any $\gamma>0$ and $g\in C(X)$, we can find a positive integer $n$, and finitely many sets $\{W_{1,n},\ldots,W_{r,n}\}$ in $W_{n}$, covering $X$, such that $|g(x)-g(y)|<\gamma$ for all $x,y$ in $W_{j,n}$, $1\leq j\leq r$. Now choose $m$ such that each set of the partition $C_{m,n}$ lies inside one of the $W_{j,n}$. Then $|g(x)-g(y)|<\gamma$ on each $C_{\bar j,m,n}$, belonging to $C_{m,n}$. Now, for given $\delta>0$, suppose we choose $\rho'$ such that $|\rho'(C_{\bar j,m,n})-\rho(C_{\bar j,m,n})| <\delta$, $(*)$. Then;\\

$|\int_{X}g d\rho'-\int_{X}g d\rho|=|\sum_{\bar j}\int_{C_{\bar j,m,n}}g  d\rho' - \sum_{\bar j}\int_{C_{\bar j,m,n}}g  d\rho|$\\

$\leq \sum_{\bar j}|\int_{C_{\bar j,m,n}}g  d\rho'-\int_{C_{\bar j,m,n}}g  d\rho|$, $(**)$\\

Without loss of generality, assuming $\rho'$ is positive, by definition of the integral, see \cite{Rud}, we have that;\\

$c_{\bar j}\rho'(C_{\bar j,m,n})\leq \int_{C_{\bar j,m,n}}g  d\rho' \leq d_{\bar j}\rho'(C_{\bar j,m,n})$\\

$c_{\bar j}\rho(C_{\bar j,m,n})\leq \int_{C_{\bar j,m,n}}g  d\rho \leq d_{\bar j}\rho(C_{\bar j,m,n})$\\

where $c_{\bar j}=inf_{C_{\bar j,m,n}}g$ and $d_{\bar j}=sup_{C_{\bar j,m,n}}g$. Then;\\

$c_{\bar j}\rho'(C_{\bar j,m,n})-d_{\bar j}\rho(C_{\bar j,m,n})\leq \int_{C_{\bar j,m,n}}g  d\rho' - \int_{C_{\bar j,m,n}}g d\rho$\\

$\leq d_{\bar j}\rho'(C_{\bar j,m,n})-c_{\bar j}\rho(C_{\bar j,m,n})$\\

Therefore, again, without loss of generality;\\

$|\int_{C_{\bar j,m,n}}g  d\rho'-\int_{C_{\bar j,m,n}}g  d\rho|$\\

$\leq (d_{\bar j}-c_{\bar j})\rho'(C_{\bar j,m,n})+|c_{\bar j}||\rho'(C_{\bar j,m,n})-\rho(C_{\bar j,m,n})|\leq \gamma\rho'(C_{\bar j,m,n}) + |c_{\bar j}|\delta$\\

 $(***)$\\

By $(*)$, $\rho'(X)=\sum_{\bar j}\rho'(C_{\bar j,m,n})\leq \sum_{\bar j}\rho(C_{\bar j,m,n})+\delta m^{n}=1+\delta m^{n}$, so using $(**)$, $(***)$, and the fact that $|g|\leq M$;\\

$|\int_{X}g d\rho'-\int_{X}g d\rho|\leq\gamma(1+\delta m^{n})+\delta Mm^{n}$\\

So if we choose $0<\gamma<{\epsilon\over 2}$ and $0<\delta<{\epsilon\over 2(\gamma+M)m^{n}}$, we obtain;\\

$|\int_{X}g d\rho'-\int_{X}g d\rho| < \epsilon$\\

as required.

\end{proof}

We finally claim;\\

\begin{theorem}
\label{fifteen}
If $C_{m,n}$ is a partition, as in Definition \ref{thirteen} and $\delta>0$, then there exists a periodic element $\beta$, such that;\\

$|\rho_{\beta}(C_{\bar j,m,n})-\rho(C_{\bar j,m,n})| <\delta$\\

for all sets $C_{\bar j,m,n}$ belonging to $C_{m,n}$.

\end{theorem}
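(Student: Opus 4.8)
The plan is to build $\beta$ as a long periodic concatenation of orbit segments that visit each cell $C_{\bar j, m, n}$ with the correct asymptotic frequency. The key observation is that $\rho_{\beta}(C_{\bar j,m,n})$ depends only on the first $n$ coordinates of the points $\sigma^i\beta$, i.e. on the $\pi_n$-images, so it suffices to control the empirical distribution of length-$n$ windows in the sequence $\beta$. First I would reduce to a measure on the finite alphabet: let $\lambda = \pi_{n*}\rho$ be the pushforward of $\rho$ to $[0,1]^n$, and note $\rho(C_{\bar j,m,n}) = \lambda(B_{\bar j,m,n})$. The numbers $p_{\bar j} = \lambda(B_{\bar j,m,n})$ are nonnegative reals summing to $1$; approximate each by a rational $q_{\bar j} = N_{\bar j}/N$ with common denominator $N$ so that $\sum_{\bar j} q_{\bar j} = 1$ and $|q_{\bar j} - p_{\bar j}| < \delta/2$ for every $\bar j$, which is possible since there are only $m^n$ cells.

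Next I would produce an actual point of $[0,1]^n$ in each cell: pick a representative $b_{\bar j} \in B_{\bar j,m,n}$ (say the lower-left corner, or the center), so that $\pi_n$ of the relevant shifts of $\beta$ will land in the desired cells. The construction of $\beta$ is then combinatorial: I want a periodic sequence $\beta \in [0,1]^{\mathcal N}$ of some period $c$ such that, reading off the windows $(\beta(t),\beta(t+1),\ldots,\beta(t+n-1))$ for $t = 0,\ldots,c-1$ (indices mod $c$), the cell $B_{\bar j,m,n}$ is hit exactly $N_{\bar j}$ times when $c = N$ — or more robustly, a proportion within $\delta$ of $p_{\bar j}$. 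Concretely, fix any enumeration of the cells and form the block obtained by concatenating, for each $\bar j$, the value-pattern of $b_{\bar j}$ repeated $N_{\bar j}$ times; pad with $n-1$ extra coordinates by wrapping around (this is why periodicity is convenient — the window count is cyclic). Setting $\beta$ to be the period-$c$ extension of this block, the number of indices $t \in \{0,\ldots,c-1\}$ with $\pi_n(\sigma^t\beta) \in B_{\bar j,m,n}$ differs from $N_{\bar j}$ by at most a bounded "boundary" term coming from the $n-1$ window positions straddling the junctions between consecutive blocks; there are at most $m^n$ junctions, so this error is at most $(n-1)m^n$, independent of $N$. Hence $\bigl|\rho_\beta(C_{\bar j,m,n}) - q_{\bar j}\bigr| \le (n-1)m^n / N$, and combining with $|q_{\bar j} - p_{\bar j}| < \delta/2$ gives the claim once $N$ is chosen large enough that $(n-1)m^n/N < \delta/2$.

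The main obstacle — and the step requiring the most care — is the bookkeeping at the block junctions: a window of length $n$ positioned near the boundary between the $b_{\bar i}$-block and the $b_{\bar j}$-block sees a mixture of coordinates from both representatives, so it may land in an unintended cell, or miss an intended one. The fix is exactly to make $N$ (hence $c$) huge compared to $(n-1)m^n$, so that these $O((n-1)m^n)$ anomalous windows are negligible against the $N_{\bar j} = q_{\bar j} N$ "good" windows; one should also double-check that $\rho_\beta$ is well-defined on the cylinder $\sigma$-algebra, i.e. that $\rho_\beta(C_{\bar j,m,n})$ as computed equals $\tfrac{1}{c}\sum_{i=0}^{c-1}\mathbf{1}_{C_{\bar j,m,n}}(\sigma^i\beta)$, which is immediate from the definition of $\rho_\beta$ in Lemma \ref{eleven}. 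No continuity or approximation of $g$ is needed here — that was already handled in Lemma \ref{fourteen} — so the argument is purely about realizing a prescribed finite-dimensional marginal by a periodic orbit, and the estimate above delivers it.
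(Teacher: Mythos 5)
Your reduction to the finite problem (push $\rho$ forward to the cells and approximate the cell masses by rationals with common denominator $N$) matches the paper's first step, but the combinatorial construction of $\beta$ fails, and not because of bookkeeping at the junctions. When you concatenate $N_{\bar j}$ copies of the $n$-tuple $b_{\bar j}$ and then read off \emph{every} length-$n$ window of the resulting periodic sequence, only the windows starting at positions divisible by $n$ inside that sub-block read the pattern $b_{\bar j}$; the remaining $n-1$ out of every $n$ windows read cyclic rotations of $b_{\bar j}$ and land in the \emph{rotated} cells. So the empirical window distribution of your $\beta$ is not $(q_{\bar j})$ but the average of $q$ over the cyclic-rotation orbit of each index, and the discrepancy is of order $1$, not $O((n-1)m^{n}/N)$. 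Concretely, take $n=3$, $m=2$ and let $\rho$ be the orbit measure of the period-$2$ point $(1/4,3/4,1/4,3/4,\ldots)$, so that the cells indexed by $(0,1,0)$ and $(1,0,1)$ each have $\rho$-measure $1/2$ and all others have measure $0$. Inside the sub-block built from repeated copies of $(1/4,3/4,1/4)$ the three window types $(0,1,0)$, $(1,0,0)$, $(0,0,1)$ each occur with frequency $1/3$, so your $\beta$ assigns the cell $(0,1,0)$ empirical measure tending to $1/6$ rather than $1/2$, no matter how large $N$ is.

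The missing idea is that consecutive windows of any sequence overlap in $n-1$ symbols, so the $n$-blocks cannot be laid down independently: the block read at position $t+1$ is forced to begin with the last $n-1$ symbols of the block read at position $t$. Realizing the prescribed cell frequencies therefore amounts to finding a closed path in a de Bruijn-type graph on $\Sigma^{n-1}$ traversing each edge $\xi\in\Sigma^{n}$ exactly $N\kappa'(\xi)$ times, and such an Eulerian circuit exists precisely because shift-invariance of $\rho$ gives the balance condition $\sum_{\xi_{0}}\kappa((\xi_{0},\xi_{1},\ldots,\xi_{n-1}))=\sum_{\xi_{0}}\kappa((\xi_{1},\ldots,\xi_{n-1},\xi_{0}))$ (the condition $(*)$ in the paper, preserved by the rational approximation $\kappa'$). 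Your proposal never invokes shift-invariance of $\rho$ at all, which is itself a warning sign: an arbitrary probability vector on the cells is in general not approximable by the window distribution of any periodic point, so the problem cannot be ``purely about realizing a prescribed finite-dimensional marginal.'' The paper's graph-theoretic footnote is exactly the Eulerian-path argument your construction is missing.
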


\begin{proof}
Let $\Sigma=\{{1\over 2m},{3\over 2m},\ldots,{2m-1\over 2m}\}$. Define $\kappa:\Sigma^{n}\rightarrow{\mathcal{R}}$ by;\\

$\kappa(({{2j_{0}+1}\over 2m},\ldots,{{2j_{n-1}+1}\over 2m}))=\rho(C_{\bar j,m,n})$\\

As $C_{m,n}$ is a partition of $X$ and $\rho$ is a probability measure, $\kappa$ is a probability measure on $\Sigma^{n}$. Moreover, using the partition property and the fact that $\rho$ is $\sigma$-invariant;\\

$\sum_{\xi_{0}\in\Sigma}\kappa((\xi_{0},\ldots,\xi_{n-1}))=\rho(\pi_{n}^{-1}([0,1]\times E_{j_{1},m}\times\ldots\times E_{j_{n-1},m}))$\\

$=\rho(\pi_{n}^{-1}(E_{j_{1},m}\times\ldots\times E_{j_{n-1},m}\times [0,1]))$\\

$=\sum_{\xi_{0}\in\Sigma}\kappa((\xi_{1},\ldots,\xi_{n-1},\xi_{0}))$ $(*)$\\

Now let $N>0$ be a sufficiently large positive integer, then we claim that we can find a probability measure $\kappa'$ on $\Sigma^{n}$ such that;\\

$(i)$. $|\kappa'(\bar{\xi})-\kappa(\bar{\xi})|<\delta$\\

$(ii)$. The condition $(*)$ still holds.\\

$(iii)$. $N\kappa'(\bar{\xi})$ is a non-negative integer, for all $\bar{\xi}\in\Sigma^{n}$\\

This follows from a simple linear algebra argument. We can identify the set of real measures on $\Sigma^{n}$ with the real vector space $V$ of dimension $m^{n}$. The condition $(*)$ then defines a subspace $W\subset V$. The condition of being a probability measure requires that;\\

$\sum_{\xi_{0},\ldots,\xi_{n-1}\in\Sigma^{n}}\kappa((\xi_{1},\ldots,\xi_{n-1},\xi_{0}))=1$, $(**)$\\

which defines an affine space $S_{aff}\subset V$. $S_{aff}\cap W$ contains a rational point $q$, corresponding to the probability measure with coordinates $m^{-n}$. It is straightforward to see that $(S_{aff}\cap W)=[(S_{aff}-q)\cap W]+q$. Moreover, $(S_{aff}-q)\cap W$ is a vector space defined by rational coefficients, so it has a rational basis. This shows that rational points are dense in $S_{aff}\cap W$. We can, without loss of generality, assume that all the coordinates of $\kappa$ are strictly greater than zero. If not, consider instead the space $S_{aff}\cap W\cap W'$, where $W'=Ker(\pi)$ is the kernel of the projection onto the non-zero coordinates of $\kappa$. The same argument shows that rational points are dense in $S_{aff}\cap W\cap W'$. We can now obtain a probability measure $\kappa'$, satisfying conditions $(i)-(iii)$, by finding a rational vector sufficiently close to $\kappa$ in $S_{aff}\cap W$, and choosing $N$ large enough.\\

Now take a longest sequence $\{\xi^{0},\ldots,\xi^{r-1}\}$ of elements in $\Sigma^{n}$, such that;\\

$(1)$. $(\xi_{1}^{i},\ldots,\xi_{n-1}^{i})=(\xi_{0}^{i+1},\ldots,\xi_{n-2}^{i})$.\\

$(2)$. $Card(\{i:0\leq i<r, \xi^{i}=\xi\})\leq N\kappa'(\xi)$ for any $\xi\in\Sigma^{n}$\\

where $\xi^{i}=(\xi_{0}^{i},\ldots,\xi_{n-1}^{i})$, for $0\leq i\leq r$, and $\xi^{r}=\xi^{0}$.\\

Then, by graph theoretical considerations, (\footnote{The graph theory argument proceeds as follows. We construct a tree. For every $\xi'\in\Sigma^{n-1}$, where $\xi'=(\xi_{1},\ldots,\xi_{n-1})$, associate a vertex $v_{\xi'}$ (the trunk). Similarly, for every $\xi\in\Sigma^{n}$, where $xi=(\xi_{0},\ldots,\xi_{n-1})$, associate two vertices $l_{\xi}$ (left) and $r_{\xi}$ (right). Attach the vertex $l_{\xi}$ to $v_{\xi'}$ iff $\pi(\xi)=\xi'$, where $\pi$ is the projection onto the last $n-1$ coordinates, and, attach $l_{\xi}$ to $v_{\xi'}$. iff $\pi'(\xi)=\xi'$, where $\pi'$ is the projection onto the first $n-1$ coordinates. In this way, we obtain a tree, having $m^{n-1}(2m+1)$ vertices, $m^{n-1}(2m)$ branches, and $m_{n-1}$ components. Each element $\xi\in \Sigma^{n}$ corresponds to two vertices, one on the left and one on the right of the tree. Now attach weights $m_{\xi}=n_{\xi}$ to the left vertices and right vertices respectively, by assigning the vertices $l_{\xi}$ and $r_{\xi}$, the weights $m_{\xi}=N\kappa'(\xi)$ and $n_{\xi}=N\kappa'(\xi)$ respectively. Observe that, by the condition $(*)$ in the main text, for any given $\xi'$;\\\

$m_{\xi'}=\sum_{\xi\in\Sigma^{n}:\pi(\xi)=\xi'}m_{\xi}=n_{\xi'}=\sum_{\xi\in\Sigma^{n}:\pi'(\xi)=\xi'}n_{\xi}$ $(\dag)$\\

Now, given a sequence $\{{\xi}^{0},{\xi}^{1},\ldots,{\xi}^{k}\}$ of elements in $\Sigma^{n}$, where $\xi^{i}=(\xi^{i}_{0},\ldots, \xi^{i}_{n-1})$, for $0\leq i\leq k$, we attach sets $L_{\xi}$ to each vertex $l_{\xi}$, by requiring that, $\xi^{i}\in L_{\xi}$ iff $\xi^{i}=\xi$, and, similarly, we attach sets $R_{\xi}$ to each vertex $r_{\xi}$. We call a sequence allowed if $(i)$. For each ${\xi}\in\Sigma^{n}$, $Card(L_{\xi})=Card(R_{\xi})\leq m_{\xi}=n_{\xi}$ and $(ii)$. For each $1\leq i\leq k$, if $\xi^{i}$ appears in the set $R_{\xi}$, then $\xi^{i-1}$ appears in a set $L_{\xi''}$, where $l_{\xi''}$ and $r_{\xi}$ are attached to the same vertex $v_{\xi'}$, so that $\pi(\xi'')=\pi'(\xi)=\xi'$. Clearly, all allowed sequences are bounded in length by $N\kappa'(X)$, so there exists a longest allowed sequence $s=(\xi^{i})_{0\leq i\leq t}$. Let $\xi^{t}$ be the final element in the sequence, and suppose that $\xi^{t}\in L_{\xi''}$, then, we claim that $\xi^{0}$ belongs to a set $R_{\xi}$, where $\pi(\xi'')=\pi'(\xi)=\xi'$, $(\dag\dag)$. If not, all such sets $R_{\xi}$, with $\pi'(\xi)=\pi(\xi'')$, consists of elements $\xi^{i}$ with $i\geq 1$. If, for one of these sets $R_{\xi}$, $Card(R_{\xi})\lneq n_{\xi}$, then we can extend the sequence by setting ${\xi}^{t+1}=\xi$, clearly such a sequence is allowed, contradicting maximality. So we can assume that $Card(R_{\xi})= n_{\xi}$. By condition $(ii)$, for every element $\xi^{i}$, $i\geq 1$, appearing in $R_{\xi}$, there exists an element $\xi^{i-1}$ appearing in an $L_{\xi''}$, with $\pi(\xi'')=\pi({\xi}^{t})$. This provides a total of $w+1$ elements appearing in such $L_{\xi''}$, where $w=\sum_{\xi\in\Sigma^{n}:\pi'(\xi)=\xi'}n_{\xi}$. By $(\dag)$, this is greater than
$\sum_{\xi\in\Sigma^{n}:\pi(\xi)=\xi'}m_{\xi}$. Clearly, this contradicts condition $(i)$ of an allowed path. Hence, $(\dag\dag)$ is shown. Observe also that if $\xi'\in\Sigma^{n-1}$, and $s_{r,\xi'}$ denotes the total number of elements from the sequence $s$, appearing in sets to the right of $\xi'$, $s_{l,\xi'}$, to the left, then $s_{l,\xi'}=s_{r,\xi'}$, In particular, by $(\dag)$, $m_{\xi'}-s_{l,\xi'}=n_{\xi'}-s_{r,\xi'}\geq 0$, so the number of "vacant slots" (if there are any), is the same on both sides of a given $\xi'$, $(\dag\dag\dag)$. In order to see this, we can, without loss of generality, assume that $\pi'(\xi^{0})\neq\xi'$, then just note that an element $\xi^{i+1}$ belongs to a set on the right of $\xi'$ iff $\xi^{i}$ belongs to a set on the left of $\xi'$, by condition $(ii)$ of an allowed path. We now claim that for all $\xi\in\Sigma^{n}$, $Card(R_{\xi})=n_{\xi}$, $(\dag\dag\dag\dag)$, (so there are no vacant slots). We have already shown this in the particular case when $\pi'(\xi)=\pi'(\xi^{0})$. We define an element $\xi$ to be cyclic if $\pi(\xi)=\pi'(\xi)$,
so cyclic elements are just constant sequences. We define an element $\xi$ to be free if $Card(R_{\xi})\lneq n_{\xi}$.
No free cyclic element $\xi_{cyc}$ can encounter the sequence $s$, for suppose that there exists a $\xi^{i}$, for some $0\leq i\leq t$, with $\pi(\xi^{i})=\pi'(\xi_{cyc})$, then we can extend the sequence $s$ to $s'=\{\xi^{0},\ldots,\xi^{i},\xi_{cyc},\xi^{i+1},\ldots,\xi^{t}\}$, and still obtain an allowed path, contradicting maximality. So we have that, if $\xi$ is free cyclic, with $\pi_{\xi}=\xi'$, then $s_{l,\xi'}=s_{r,\xi'}=0$, $(\dag\dag\dag\dag\dag)$. Now suppose there exists a free element $\xi_{free}$. Choose the largest $k$, with $0\leq k\leq t$, such that $\xi^{k}$ appears in $L_{\xi''}$ with $\pi(\xi'')=\pi'(\xi_{free})$, $(\sharp)$. As we have observed, $k\lneq t$. We construct a forward path from $\xi_{free}$ as follows. Define $\eta^{0}=\xi_{free}$, add the element $\eta^{0}$ to $R_{\xi_{free}}$ and $L_{\xi_{free}}$, and call the new sets $R_{0,\xi}$ and $L_{0,\xi}$, for $\xi\in \Sigma^{n}$. Having defined $\eta^{j}$, there are four cases. If $\pi(\eta^{j})=\pi'(\eta^{0})$, terminate the sequence. Otherwise, if $\pi(\eta^{j})=\pi(\xi_{cyc})$ for some cyclic element with $Card(R_{j,\xi_{cyc}})\lneq n_{\xi_{cyc}}$, then define $\eta^{j+1}=\xi_{cyc}$, add the element $\eta^{j+1}$ to $R_{j,\xi_{cyc}}$ and $L_{i,\xi_{cyc}}$, calling the new sets $R_{j+1,\xi}$ and $L_{j+1,\xi}$, for $\xi\in \Sigma^{n}$. If there is no such cyclic element, and there exists a free element $\xi'$ with $\pi(\eta^{j})=\pi'(\xi')$ and $Card(R_{j,\xi'})\lneq n_{\xi'}$, then define $\eta^{j+1}=\xi'$ (so there is some choice here), and, as before, redefine the sets $R_{j,\xi}$ and $L_{j,\xi}$ to $R_{j+1,\xi}$ and $L_{j+1,\xi}$, for $\xi\in \Sigma^{n}$. If there is no free element of this form, then terminate the sequence. It is straightforward to see, using $(\dag\dag\dag)$, $(\dag\dag\dag\dag\dag)$, and the fact that $\eta^{0}$ is not cyclic, that the sequence $\{\eta^{0},\ldots,\eta^{j}\}$ terminates after a finite number of steps $l$, with $l>0$, and $\pi(\eta^{l})=\pi'(\eta^{0})$. Moreover, for all $k<i<t$, and $0\leq j\leq l$, we have that $\pi(\xi^{i})\neq\pi'(\eta^{j})$, by $(\sharp)$. Hence, we can construct an allowed sequence $s''=\{\xi^{0},\ldots,\xi^{k},\eta^{0},\ldots,\eta^{l}\xi^{k+1},\ldots,\xi^{t}\}$, contradicting maximality of $s$. This shows $(\dag\dag\dag\dag)$. It is clear that the sequence  $s'''=\{\xi^{0},\ldots,\xi^{r-1}\}$, as defined in the main text, is a longest allowed sequence, as defined in this footnote, using $(\dag\dag)$. Hence, by $(\dag\dag\dag\dag)$, we have equality in $(2)$ as required.}), one can show that equality holds in the above inequality in $(2)$, for any $\xi\in\Sigma^{n}$, $(***)$. Now let $\beta$ be the periodic element in $[0,1]^{\mathcal N}$, with period $n+r-1$, defined by;\\

$(\beta(0),\beta(1),\ldots,\beta(n+r-2))=(\xi^{0}_{0},\xi^{0}_{1},\ldots,\xi^{0}_{n-1},\xi^{1}_{n-1},\xi^{2}_{n-1},\ldots,\xi^{r-1}_{n-1})$\\

By $(i)$, it is sufficient to prove that, for each $\bar j\in m^{n}$;\\

$|\rho_{\beta}(C_{\bar j,m,n})-\kappa'(\xi_{\bar j})|<\epsilon$, $(****)$,\\

 where $\epsilon=min_{\bar i}(\delta-|\kappa'(\xi_{\bar i})-\kappa(\xi_{\bar i})|)$, and $\xi_{\bar j}$ is the unique element of $\Sigma^{n}$ lying inside $C_{\bar j,m,n}$. By definition of $\rho_{\beta}$, $\rho_{\beta}(C_{\bar j,m,n})={c_{\bar j}\over n+r-1}$, where;\\

$c_{\bar j}=Card(\{k:0\leq k<n-r-1, \pi_{n}(\sigma^{k}(\beta))=\xi_{\bar j}\})$.\\

 By definition of $\beta$, and $(***)$, $c_{\bar j}={N\kappa'(\xi_{\bar j})+y\over n+r-1}$, where $0\leq y\leq n$. As $\kappa'$ is a probability measure, again by $(***)$, we have that $r-1=N$. Hence;\\

${c_{\bar j}\over n+r-1}={N\kappa'(\xi_{\bar j})+y\over N+n}=\kappa'(\xi_{\bar j})+{y-n\kappa'(\xi_{\bar j})\over N+n}$.\\

 Therefore,\\

 $|\rho_{\beta}(C_{\bar j,m,n})-\kappa'(\xi_{\bar j})|\leq{n\over N+n}<\epsilon$.\\

 if we choose $N$ sufficiently large. Hence, $(****)$ and the theorem are shown.

\end{proof}

We summarise what we have done;\\

\begin{theorem}
\label{sixteen}
The Ergodic Theorem \ref{one} holds and admits a non-standard proof.

\end{theorem}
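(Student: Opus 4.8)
The plan is simply to assemble the chain of reductions established above. First I would invoke Lemma \ref{four}, which reduces Theorem \ref{one} to proving the Ergodic Theorem for every space $({\mathcal R}^{\mathcal N}, \mathfrak{D}, \lambda,\sigma)\in\mathcal{P}$. By Lemma \ref{seven}, each such space is isomorphic, in the sense of dynamical systems, to a space $({[0,1]}^{\mathcal N},\mathfrak{E},\rho,\sigma)\in\mathcal{Q}$; since the isomorphism maps $r,s$ are measurable, measure preserving, and commute with $\sigma$ almost everywhere, a change of variables argument identical to those in the proofs of Lemmas \ref{four} and \ref{six} shows that the Ergodic Theorem holds for the $\mathcal{P}$-space if and only if it holds for the corresponding $\mathcal{Q}$-space. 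Thus it suffices to prove the Ergodic Theorem for every space in $\mathcal{Q}$.

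Next I would show that every space in $\mathcal{Q}$ possesses a typical element in the sense of Definition \ref{eight}. Fix $({[0,1]}^{\mathcal N},\mathfrak{E},\rho,\sigma)\in\mathcal{Q}$. Given $g\in C([0,1]^{\mathcal N})$ and $\epsilon>0$, Lemma \ref{fourteen} produces positive integers $m,n$ and $\delta>0$ such that closeness of a regular measure $\rho'$ to $\rho$ on every cell of the partition $C_{m,n}$ forces $|\int g\,d\rho'-\int g\,d\rho|<\epsilon$; Theorem \ref{fifteen} then supplies a periodic element $\beta$ with $|\rho_\beta(C_{\bar j,m,n})-\rho(C_{\bar j,m,n})|<\delta$ for all cells, hence $|\int g\,d\rho_\beta-\int g\,d\rho|<\epsilon$. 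This is exactly the hypothesis of Lemma \ref{twelve}, which yields a sequence of periodic elements $(\alpha_n)_{n\in\mathcal N}$ with $(\rho_{\alpha_n})_{n\in\mathcal N}$ converging weakly to $\rho$, and Lemma \ref{eleven} then constructs from this sequence a genuine typical element $\alpha$ for $\rho$.

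With a typical element in hand, Theorem \ref{nine} shows that $({[0,1]}^{\mathcal N},\mathfrak{E},\rho,\sigma)$ is a factor of $(K,{\mathfrak B},P,\phi)$ in the sense of Definition \ref{five}. By Theorem \ref{three}, the Ergodic Theorem holds for $(K,{\mathfrak B},P,\phi)$, and Lemma \ref{six} (whose proof, being only a change of variables, applies verbatim to $\mathcal{Q}$ as well as to $\mathcal{P}$) transfers it to this factor. Hence the Ergodic Theorem holds for every space in $\mathcal{Q}$, therefore for every space in $\mathcal{P}$, and therefore, by Lemma \ref{four}, in full generality.

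Finally, I would observe that the proof is genuinely a non-standard one: the base case Theorem \ref{three} is proved by hyperfinite approximation on the Loeb space $(K,{\mathfrak B},P)$, and the factor map $\Gamma$ of Theorem \ref{nine} is defined via the standard part of an internal orbit, while the remaining ingredients (Lemmas \ref{eleven}--\ref{fourteen} and Theorem \ref{fifteen}) are purely combinatorial and functional-analytic and, crucially, do not themselves invoke the Ergodic Theorem. The only point requiring care is the interface between Lemma \ref{seven} and the factor machinery of Lemma \ref{six} and Theorem \ref{nine}, which were set up relative to $(K,{\mathfrak B},P,\phi)$: one must check that the isomorphism of Lemma \ref{seven} composes correctly with the factor structure, but this is the routine change of variables indicated in the first paragraph. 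There is thus no substantive obstacle beyond verifying that this bookkeeping is consistent.
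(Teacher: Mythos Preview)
Your proposal is correct and follows exactly the paper's own proof, which simply reads ``Combine Theorems \ref{three}, \ref{nine}, \ref{fifteen}, and Lemmas \ref{four}, \ref{six}, \ref{seven}, \ref{eleven}, \ref{twelve}, \ref{fourteen}.'' You have spelled out the chain of reductions in the intended order, and your remarks about Lemma \ref{six} applying verbatim to $\mathcal{Q}$-factors (which Definition \ref{five} explicitly covers) and about the routine change-of-variables transfer across the isomorphism of Lemma \ref{seven} are exactly the bookkeeping the paper leaves implicit.
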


\begin{proof}

Combine Theorems \ref{three},\ref{nine},\ref{fifteen}, and Lemmas \ref{four},\ref{six},\ref{seven},\ref{eleven},\ref{twelve},\ref{fourteen}.\\

\end{proof}
\begin{rmk}
\label{seventeen}
There are some outstanding questions in Ergodic Theory, which one might hope to solve using nonstandard methods, similar to the above. One of these is Ornstein's Isomorphism Theorem, I hope to investigate this direction further.
\end{rmk}

\end{section}
\begin{section}{Appendix}
\begin{theorem}
\label{appone}
 Suppose $g:X\rightarrow{\mathcal R}$ is integrable with respect to $\mu_{L}$, $\mu_{L}(X)<\infty$, and $\epsilon>0$ is standard, then there exist $F,G:X\rightarrow^{*}{\mathcal R}$, which are $\mathfrak{A}$-measurable, such that;\\

 (i). $G\leq g\leq F$.\\

 (ii). $|\int_{A}g d\mu_{L} -\int_{A} G d\nu|<\epsilon$, $|\int_{A}g d\mu_{L} -\int_{A} F d\nu|<\epsilon$\\

 for all $A\in\mathfrak{A}$.\\

 \end{theorem}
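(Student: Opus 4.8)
The final statement to prove is Theorem~\ref{appone} in the Appendix: given $g$ integrable with respect to a Loeb measure $\mu_L$ on a space $X$ with $\mu_L(X)<\infty$, and standard $\epsilon>0$, one finds internal $\mathfrak{A}$-measurable functions $F,G$ with $G\le g\le F$ and $|\int_A g\,d\mu_L-\int_A G\,d\nu|<\epsilon$, $|\int_A g\,d\mu_L-\int_A F\,d\nu|<\epsilon$ for all $A\in\mathfrak{A}$. The plan is to reduce the general integrable $g$ to the case of simple functions, then to internal step functions, using the standard theory of lifting for Loeb measures (Anderson's lifting theorem, as developed in Section~3 of \cite{dep}).

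First I would treat the case $g\ge 0$ and $g$ bounded (say $g\le M$). Approximate $g$ from above and below in $L^1(\mu_L)$ by $\mathfrak{B}$-measurable simple functions $s\le g\le t$ with $\int_X(t-s)\,d\mu_L<\epsilon/2$; this is the standard fact that Loeb-measurable functions are $L^1$-approximable by simple functions built from Loeb-measurable sets. Next, each such simple function, being a finite combination of indicator functions of sets in $\mathfrak{B}$, can be replaced by an internal $\mathfrak{A}$-measurable step function: by the defining property of the Loeb measure, each set $C\in\mathfrak{B}$ of finite measure contains an internal $A_-\in\mathfrak{A}$ and is contained in an internal $A_+\in\mathfrak{A}$ with $\nu(A_+\setminus A_-)\simeq 0$; this is exactly the content of Definition~3.9 and Remarks~3.10 of \cite{dep} together with Theorem~3.4. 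Taking unions of finitely many such approximations (and being careful about overlaps so the coefficients come out right) produces internal $G_0\le g\le F_0$ that are $\mathfrak{A}$-measurable, with $\int_A G_0\,d\nu$ and $\int_A F_0\,d\nu$ within $\epsilon$ of $\int_A g\,d\mu_L$ simultaneously for all internal $A$, since the error is controlled by a single hyperfinite sum bounded by $M\cdot\nu$ of a set of infinitesimal measure plus the $L^1$-gap, uniformly in $A$.

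Then I would remove the boundedness and sign restrictions. For general $g\ge0$ integrable, write $g=\lim_n \min(g,n)$ with $\int_X(g-\min(g,n))\,d\mu_L\to 0$ by the MCT; pick $n$ with this tail $<\epsilon/3$, apply the bounded case to $\min(g,n)$ to get internal $G_1\le \min(g,n)$ with the integral estimate, keep $G=G_1$ as the lower function (since $G_1\le\min(g,n)\le g$), and for the upper function add a nonnegative internal function dominating the tail $g-\min(g,n)$ with small integral — again obtainable from the lifting machinery applied to the tail. For signed $g$, split $g=g^+-g^-$ and combine: set $F=F^+-G^-$ and $G=G^+-F^-$ where $(F^\pm,G^\pm)$ are the pairs for $g^\pm$; the inequalities and integral estimates add up (with $\epsilon$ adjusted by constants). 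Uniformity in $A\in\mathfrak{A}$ is automatic throughout because every estimate is of the form "a hyperfinite sum over $A$ of a nonnegative internal function with small total sum," which is monotone in $A$.

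The main obstacle I expect is the simultaneous two-sided control: one must produce $F$ and $G$ from the same decomposition so that both $\int_A F\,d\nu$ and $\int_A G\,d\nu$ lie within $\epsilon$ of $\int_A g\,d\mu_L$ for all internal $A$ at once, rather than just bracketing $\int_X g$. This forces one to do the sandwiching at the level of the sets $A_-\subseteq C\subseteq A_+$ in each layer of the simple-function approximation, and to check that the resulting total error $\frac1k\,^*\!\sum_{x\in A}(F-G)(x)$ is infinitesimal plus a standard quantity $<\epsilon$ uniformly; this is really a matter of organizing the lifting carefully and invoking the correspondence between $\int_A \cdot\,d\nu$ and $\int_A {}^\circ(\cdot)\,dP$ from Definition~3.9, Remarks~3.10, and Theorem~3.12 of \cite{dep}. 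The sign-splitting and truncation steps are then routine.
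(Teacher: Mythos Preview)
Your approach via simple-function approximation and inner/outer internal approximation of Loeb sets is genuinely different from the paper's, and it works cleanly for bounded $g\ge 0$ and for the \emph{lower} bound of general $g\ge 0$ (since $G_1\le\min(g,n)\le g$ and the $L^1$-tail is absorbed into the $\epsilon$). The gap is in the \emph{upper} bound for unbounded $g\ge 0$. You write that one should ``add a nonnegative internal function dominating the tail $g-\min(g,n)$ with small integral --- again obtainable from the lifting machinery applied to the tail.'' But this is exactly the statement you are trying to prove: the tail $g-\min(g,n)$ is still unbounded, and the lifting theorems in \cite{dep} only give an internal $H$ with ${}^{\circ}H=g-\min(g,n)$ a.e.\ $\mu_L$, not $H\ge g-\min(g,n)$ \emph{everywhere}. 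A simple function cannot dominate an unbounded function, so your reduction to the bounded case does not close; iterating the truncation never terminates.

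The paper's argument supplies precisely the missing idea. It starts from an $S$-integrable lifting $F'$ of $g$ and notes that $F'+\delta\ge g$ holds off a $\mu_L$-null set $N$. The essential step is to stratify $N$ by the \emph{value} of $g$: set $N_n=N\cap g^{-1}((n-1,n])$, cover each $N_n$ by an internal $U_n$ with $\mu_L(U_n)<\epsilon/4(n+1)^3$, and add the constant $n$ on $U_n$. The resulting increasing sequence $F_n$ has $\int_X F_n\,d\nu<C+\epsilon$ for all standard $n$; countable comprehension and overflow then produce a single internal $F_\omega$ with $F_\omega\ge g$ everywhere and $\int_X F_\omega\,d\nu<C+\epsilon$, from which the uniform estimate on all $A\in\mathfrak{A}$ follows by positivity. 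This stratify-and-overflow device is what your sketch lacks; once you have it for $g\ge 0$, your sign-splitting $F=F^+-G^-$, $G=G^+-F^-$ is the same as the paper's final step.
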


\begin{proof}
Consider, first, the case when $g\geq 0$.\\

Upper Bound. As $g$ is integrable, by Theorem 3.31 of \cite{dep}, it has an $S$-integrable lifting $F'$, such that ${^{\circ}F'}=g$ a.e $\mu_{L}$, and;\\

${^{\circ}\int_{X}F' d \nu}=\int_{X} g d\mu_{L}$\\

Without loss of generality, we cam assume that $F'\geq 0$. Now let $\epsilon>0$ be given and choose $\delta>0$ such that $\mu_{L}(X)\delta <{\epsilon\over 2}$. Then $F'+\delta$ is $S$-integrable and $F'+\delta\geq f$ a.e $\mu_{L}$, $(*)$, $F'+\delta>0$. Moreover;\\

${^{\circ}\int_{X} (F'+\delta) d\nu}=\int_{X} g d\mu_{L} + \delta\mu_{L}(X) < C+{\epsilon\over 2}$, $(**)$\\

where $C=\int_{X} g d\mu_{L}$. Let $N\in\mathfrak{M}_{L}$, with $\mu_{L}(N)=0$, such that $(*)$ holds on $N^{c}$. Let $N_{n}=N\cap g^{-1}((n-1,n])$, for $n\in\mathcal{N}_{>0}$, $N_{0}=N\cap g^{-1}(0)$. Then $N=\bigcup_{n\geq 0}N_{n}$, and $\mu_{L}(N_{n})=0$. By Lemma 3.15 (3.4(i)) of \cite{dep}, we can choose $U_{n}\supset N_{n}$, with $U_{n}\in\mathfrak{A}$, such that $\mu_{L}(U_{n})<{\epsilon\over 4(n+1)^{3}}$. Inductively, define $F_{0}=F'+\delta$, and, having defined $F_{n}$, let $F_{n+1}=F_{n}$ on $U_{n+1}^{c}$, and $F_{n+1}=F_{n}+n+1$ on $U_{n+1}$. Then $\{F_{n}\}$ is an increasing sequence of $\mathfrak{A}$-measurable functions. Moreover;\\

$\int_{X} F_{n+1} d\nu$\\

$=\int_{U_{n+1}^{c}} F_{n} d\nu + \int_{U_{n+1}} (F_{n}+(n+1)) d\nu$\\

$\simeq\int_{X} F_{n} d\nu + (n+1)\mu_{L}(U_{n+1})$\\

$<\int_{X} F_{n} d\nu + {\epsilon\over 4(n+1)^{2}}$\\

$\int_{X}F_{n} d\nu < C+{\epsilon\over 2} +\sum_{m=1}^{n}{\epsilon\over 4m^{2}}<C+\epsilon$ (using $(**)$)\\

We clearly have that for all $x\in N_{n}$, $g(x)\leq F_{n}$. Now, by countable comprehension, we can find an internal sequence $\{F_{n}\}_{n\in{^{*}\mathcal{N}}}$ extending the sequence $\{F_{n}\}_{n\in\mathcal{N}}$. By overflow, there exists an infinite $\omega$, such that $F_{n}\leq F_{\omega}$, for all $n\in\mathcal{N}$, $F_{\omega}>0$, and;\\

$\int_{X} F_{\omega} d\nu < C+\epsilon$, $(\dag)$\\

Clearly $g(x)\leq F_{\omega}(x)$, for all $x\in X$. Now, if $A\in\mathfrak{A}$, with;\\

$\int_{A} F_{\omega} d\nu-\int_{A}g d\mu_{L}>\epsilon$\\

then, using Theorem 3.16 of \cite{dep};\\

$\int_{X} F_{\omega} d\nu$\\

$=\int_{A} F_{\omega} d\nu +\int_{A^{c}} F_{\omega} d\nu$\\

$>\epsilon +\int_{A}g d\mu_{L} +\int_{A^{c}}g d\mu_{L}=C+\epsilon$\\

contradicting $(\dag)$. Setting $F=F_{\omega}$ gives an upper bound.\\

Lower Bound. Again choose $\delta>0$, with $\mu_{L}(X)\delta<{\epsilon\over 2}$. Let $F'$ be as before, then $F'-\delta$ is $S$-integrable, $F'-\delta\leq g$ a.e $\mu_{L}$, and:\\

$\int_{X} (F'-\delta) d\nu > C-{\epsilon\over 2}$\\

 Again choose $N$, with $\mu_{L}(N)=0$, such that $F'-\delta\leq g$ on $N^{c}$. Using Lemma 3.15(3.4(i)) of \cite{dep} again, we can choose a decreasing sequence of sets $\{U_{n}\}_{n\in{\mathcal N}_{>0}}$, belonging to $\mathfrak{A}$, with $U_{n}\supset N$, and $\mu_{L}(U_{n})<{1\over n}$. By $S$-integrability;\\

 ${^{\circ}\int_{U_{n}}(F'-\delta) d\nu}=\int_{U_{n}}{^{\circ}(F'-\delta)} d\mu_{L}$\\

 and;\\

 $lim_{n\rightarrow\infty}(\int_{U_{n}}{^{\circ}(F'-\delta)} d\mu_{L})=0$\\

 by the DCT, as ${^{\circ}(F'-\delta)}\chi_{U_{n}}$ converges to $0$ a.e $\mu_{L}$. Hence, for sufficiently large $n$, we can assume that;\\

 $\int_{U_{n}}(F'-\delta) d\nu <{\epsilon\over 2}$\\

 Now let $G=(F'-\delta)$ on $U_{n}^{c}$, and $G=0$ on $U_{n}$. Clearly $G(x)\leq g(x)$, for all $x\in X$. Moreover;\\

 $\int_{X} G d\nu$\\

 $=\int_{U_{n}^{c}}(F'-\delta) d\nu$\\

 $=\int_{X}(F'-\delta) d\nu -\int_{U_{n}} (F'-\delta) d\nu > C-\epsilon$\\

 The same argument as above shows that, for all $A\in\mathfrak{A}$;\\

 $\int_{A}g d\mu_{L}-\int_{A} G d\nu \leq \epsilon$\\

 Hence, $G$ is a lower bound.\\

Now, if $g$ is integrable $\mu_{L}$, we can write $g=g^{+}-g^{-}$, with $\{g^{+},g^{-}\}$ integrable $\mu_{L}$. Choosing $G\geq g^{+}$ and $H\leq g^{-}$, $G-H\geq (g^{+}-g^{-})=g$, choosing $G'\leq g^{+}$ and $H'\geq g^{-}$, $G'-H'\leq (g^{+}-g^{-})=g$, and, clearly, we can obtain the integral condition, using ${\epsilon\over 2}$.\\

\end{proof}
\end{section}


\begin{thebibliography}{99}
\bibitem{boll} Linear Analysis, Bela Bollobas, Cambridge Mathematical Textbooks, (1990).\\
\bibitem{kam} A Simple Proof of the Ergodic Theorem Using Nonstandard Analysis, Teturo Kamae, Isreal Journal of Mathematics, (1982).\\
\bibitem{Pet} Ergodic Theory, Karl Petersen, Cambridge Studies in Advanced Mathematics, (1983).\\
\bibitem{dep} Applications of Nonstandard Analysis to Probability Theory, Tristram de Piro, Msc Thesis, University of Exeter, available at http://www.magneticstrix.net, (2013).\\
\bibitem{Rob} Non-Standard Analysis, Abraham Robinson, Studies in Logic and the Foundation of Mathematics,
North-Holland, (1974).\\
\bibitem{Rud} Real and Complex Analysis, Walter Rudin, McGraw Hill Third Edition, (1987).\\
\end{thebibliography}
\end{document}